\newcommand\blfootnote[1]{%
	\begingroup
	\renewcommand\thefootnote{}\footnote{#1}%
	\addtocounter{footnote}{-1}%
	\endgroup
}
\newtheorem{thm}{Theorem}[section]
\newtheorem{prop}[thm]{Proposition}
\newtheorem{lem}[thm]{Lemma}
\newtheorem{cor}[thm]{Corollary}
\newtheorem{conj}[thm]{Conjecture}
\newtheorem{ex}[thm]{Example}
\date{\today}
\begin{document}
		
	\begin{center}
		{\Large \textbf{Signed Mahonian Polynomials on Colored Derangements}} \vspace*{0.5cm}
	\end{center}
	
	\begin{center}
		Hasan Arslan$^{*,a}$, Moussa Ahmia$^{b}$, Nazmiye Alemdar$^{a}$ 
	\end{center}
	\begin{center}
		$^{a}${\small {\textit{Department of  Mathematics, Erciyes University, 38039, Kayseri, Turkey}}}\\
		$^{b}${\small {\textit{Department of  Mathematics, Mohamed Seddik Benuyahia University, LMAM laboratory, Jijel, Algeria}}}\\
	\end{center}
	
	\blfootnote{Email Adressess: hasanarslan@erciyes.edu.tr (H. Arslan),  moussa.ahmia@univ-jijel.dz (M. Ahmia), nakari@erciyes.edu.tr (N. Alemdar)\\
		*Corresponding Author: Hasan Arslan}

		\begin{abstract}
The polynomial $\sum_{\pi \in W}q^{maj(\pi)}$ of major index over a classical Weyl group $W$ with a generating set $S$ is called the Mahonian polynomial over $W$, and also the polynomial $\sum_{\pi \in W}(-1)^{l(\pi)}q^{maj(\pi)}$ of major index together with sign over the group $W$ is called the signed Mahonian polynomial over the group $W$, where $l$ is the length function on $W$ defined in terms of the generating set $S$. We concern with the signed Mahonian polynomial 
$$\sum_{\pi \in D_{n}^{(c)}}(-1)^{L(\pi)}q^{fmaj(\pi)}$$ 
on the set $D_{n}^{(c)}$ of colored derangements in the group $G_{c,n}$ of colored permutations, where $L$ denotes the length function defined by means of a complex root system described by Bremke and Malle in $G_{c,n}$ and $fmaj$ defined by Adin and Roichman in $G_{c,n}$ represents the \textit{flag-major index}, which is a Mahonian statistic. As an application of the formula for signed Mahonian polynomials on the set of colored derangements, we will derive a formula to count colored derangements of even length in $G_{c,n}$ when $c$ is an even number. Finally, we conclude by providing a formula for the difference between the number of derangements of even and odd lengths in $G_{c,n}$ for every positive integer $c$, regardless of whether c is odd or even.
			
		\end{abstract}

\textbf{Keywords}:	Derangement, Mahonian
statistic, Mahonian polynomials, length function.\\

\textbf{2020 Mathematics Subject Classification}: 05A05, 05A15, 05A19.
\\

		\section{Introduction}
	In this paper, we deal with the $q$-enumeration of the Mahonian polynomial of colored derangements in the group of colored permutations $G_{c,n}$ based on the flag major index. Let $\mathbb{N}$ be the set of natural numbers. 
For $n\in\mathbb{N}$, let $[n]:=\{1,2,\ldots,n\}$ (where $[0]:=\emptyset$). 
The cardinality of a set $X$ will be denoted by $|X|$.
Let $c$ and $n$ be positive integers. The group of colored permutations $G_{c,n}$ may be described as follows:
The group $G_{c,n}$ of $n$ letters with $c$ colors may be regarded as a split group extension
$G_{c,n}=\mathfrak{S}_n \rtimes  \mathbb{Z}_c^n$ of $\mathbb{Z}_{c}^n$ by $\mathfrak{S}_n$, 
where $\mathbb{Z}_{c}:=\{0,1,\ldots,c-1\}$, $\mathbb{Z}_{c}^n$ is the direct product of $n$ copies of 
$\mathbb{Z}_c$ and $\mathfrak{S}_n$ is the symmetric group on $[n]$.
A much more natural manner to introduce $G_{c,n}$ is in the following way: 
Now consider the set $I=\{ i^{[k]}~:~i=1, \ldots, n; ~k= 0, \ldots, c-1\}$ as the set $[n]$ colored by the colors $0,1,\ldots, c-1$.
Then, the colored permutations group $G_{c,n}$ 
consists of all bijections $\pi~:~I \mapsto I$ providing that $\pi(i^{[k]} )=(\pi(i))^{[k]}$ 
for all $i=1, \ldots, n; ~k=0, \ldots,c-1$. 
Therefore, any element $\pi$ of the group $G_{c,n}$ is written as follows:
$$
\pi = \bigl(\begin{smallmatrix}
	1 & 2 &  \cdots  & n \\
	\pi_1^{[t_1]} & ~\pi_2^{[t_2]}& \cdots   &  \pi_n^{[t_n]}
\end{smallmatrix}\bigr),
$$
where $\pi_i\in [n]$ and $0\leq t_i \leq c-1$ for all $i=1,\ldots,n$. We shall represent $\pi$ by the word by removing its first row:
$$\pi=\pi_1^{[t_1]} ~\pi_2^{[t_2]} \cdots  \pi_n^{[t_n]}.$$
The \textit{color} number of $\pi$ is defined as the sum $\textrm{col}(\pi)=\sum_{i=1}^n t_{i}$. In particular, the underlying permutation of a colored permutation $\pi=\pi_1^{[t_1]} ~\pi_2^{[t_2]} \cdots  \pi_n^{[t_n]} \in G_{c,n}$
is denoted by $|\pi|=\pi_1 ~\pi_2 \cdots  \pi_n \in \mathfrak{S}_n$.

%In other words, colored permutation group $G_{c,n}$ consists of ordered pairs $(\beta,t)$, where $\beta=\beta_1 \beta_2 \cdots \beta_n \in S_n$ and $t=(t_1,\cdots,t_n)$ is an $n$-tuple of integers with all $t_i,~i=1,\cdots,n$ between $0$ and $m-1$. An element $(\beta, t)\in G_{c,n}$ will be represented as a word 
%\begin{equation*}
%	\beta_1^{[t_1]}\beta_2^{[t_2]}\cdots %\beta_n^{[t_n]}
%\end{equation*}
%such that we remove the superscript $[t_i]$ when $t_i=0$. It is called to be the \textit{window notation} of $(\beta, t)$. For example, $(7136425,~(3,1,2,0,3,0,4))\in G_{5,7}$ can be represented as $7^{[3]}1^{[1]}3^{[2]}64^{[3]}25^{[4]}$. Clearly, $|G_{c,n}|=c^nn!$. The multiplication in $G_{c,n}$ is defined as follows: For any $(\beta, t)$ and $(\sigma, t')$ such that $t=(t_1,\cdots,t_n)$ and $t'=(t'_1,\cdots,t'_n)$ in $\mathbb{Z}_c^n$
%\begin{equation*}
%	(\beta, t)\cdot (\sigma, t')=(\beta \circ \sigma, (t'_1+t_{\sigma(1)},\cdots, t'_n+t_{\sigma(n)})).
%\end{equation*}
%where $\beta \circ \sigma$ is the composition in $\mathfrak{S}_n$. The group $G_{c,n}$ can be generated by the canonical set of generators
%\begin{equation*}
%	S_{G_{c,n}}=\{r_0,r_1,\cdots,r_{n-1}\},
%\end{equation*}
%defined with their action on the set $[n]$ in the following way:
%$$r_i(j)=
%\begin{cases}
%	i+1,  &  \textrm{if}~ j=i; \\
%	i,  &  \textrm{if}~ j = i+1;\\
%	j,  &  \textrm{otherwise.}
%\end{cases}
%$$ while exceptional generator $r_0$ is identified %with 
%$$r_0(j)=
%\begin{cases}
%	1^{[1]},  & \textrm{if} ~ j=1; \\
%	j,  &  \textrm{otherwise.}
%\end{cases}
%$$

As is well-known from \cite{stanley2011}, the inversion number and the descent set of $\sigma \in \mathfrak{S}_n$ are respectively defined in the following way:
\begin{align*}
inv (\sigma)=&| \{(i,j) \in [n] \times [n] ~:~ i<j~\textrm{and}~\sigma_i > \sigma _{j}\} |, \\
Des (\sigma)=&\{i \in [n-1] ~:~ \sigma_i > \sigma _{i+1}\}. 
\end{align*}

The usual (type $A$) major index, a statistic derivable from $Des (\sigma)$, is identified as $maj (\sigma)= \sum_{i \in Des (\sigma)}i$.
MacMahon proved in \cite{macmahon1915} that the the number of inversions \textit{inv} is equidistributed with the major index \textit{maj} over the symmetric group $\mathfrak{S}_n$, that is, 
$$\sum_{\sigma \in \mathfrak{S}_n}q^{inv(\sigma)}=\sum_{\sigma \in \mathfrak{S}_n}q^{maj(\sigma)}=[n]!_q$$
where $q$ is an indeterminate, $[i]_q:=\frac{1-q^i}{1-q}$ for all $i=1,\ldots,n$ is a $q$-integer and $[n]_q!:=[1]_q [2]_q\cdots [n-1]_q [n]_q$ is the usual $q$-analogue of $n!$.
If a permutation statistic is equidistributed with the length function, that is, the number of inversions, then it is called \textit{Mahonian}.

%Following \cite{adin2001}, we let $\sigma_0:=r_0$ and for all $i \in [1,n-1]$,~$\sigma_i:=r_ir_{i-1}\cdots r_1 r_0 \in B_n$. Thus, the collection $\{\sigma_0, \sigma_1, \cdots, \sigma_{n-1}\}$ is a different set of generators for $G_{c,n}$ and any $\pi\in G_{c,n}$ has a unique expression 
%\begin{equation}\label{flag0}
%\pi=\sigma_{n-1}^{k_{n-1}}\cdots \sigma_{2}^{k_{2}}\sigma_{1}^{k_{1}}\sigma_{0}^{k_{0}}
%\end{equation}
%with $0\leq k_i \leq 2i+1$ for all $0\leq i \leq n-1$. The \textit{flag-major index} was defined for the group $G_{c,n}$ as follows (see \cite{adin2001}): Let $\pi \in G_{c,n}$. Then
%\begin{equation}\label{flag1}
%fmaj(\pi)=\sum_{i=0}^{n-1}k_i.
%\end{equation}

Given $
\pi = \bigl(\begin{smallmatrix}
	1 & 2 &  \cdots  & n \\
	\pi_1^{[t_1]} & ~\pi_2^{[t_2]}& \cdots   &  \pi_n^{[t_n]}
\end{smallmatrix}\bigr) \in G_{c,n}
$, the length of $\pi$ is defined by 
\begin{equation} \label{length}
L(\pi) := \textrm{col}(\pi)+c \cdot \sum_{t_j \neq 0} |\{(i,j) : i<j ~\textrm{and}~ \pi_i<\pi_j\}|+inv(|\pi|)
\end{equation}
due to \cite{bremke1997}, where $inv(|\pi|)$ corresponds to the classical inversion in $\mathfrak{S}_n$. Clearly, $L(|\pi|)=inv(|\pi|)$ for all $\pi \in G_{c,n}$. For example, the length of $\pi=2^{[3]} ~1^{[1]} ~3~4^{[2]}~5 \in G_{4,5}$ is $L(\pi)=6+4 \cdot 3+1=19$. It is well known from \cite{arslan2024} that for each $\pi \in G_{c,n}$ the length $L(\pi)$ corresponds to the colored inversion $inv_c(\pi)$. The flag-major index of any colored element can be practically computed by using the next theorem.

\begin{thm} [Adin-Roichman \cite{adin2001}]\label{adinroichman}
Let $\pi \in G_{c,n}$. Then
\begin{equation}\label{fmaj2001}
fmaj(\pi)=c \cdot maj(\pi)+col(\pi)
\end{equation}
where maj is computed with respect to the following total order: 
\begin{equation}\label{totalord}
1^{[c-1]}<\cdots<n^{[c-1]}<\cdots<1^{[2]}<\cdots<n^{[2]}<1^{[1]}<\cdots<n^{[1]}<1<\cdots < n.
\end{equation}
\end{thm}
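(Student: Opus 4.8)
The plan is to derive the identity \eqref{fmaj2001} directly from the definition of $fmaj$ by making the descent set of $\pi$ with respect to the total order \eqref{totalord} completely explicit. First I would translate \eqref{totalord} into a descent criterion: since $a^{[k]}<b^{[l]}$ holds precisely when $k>l$, or $k=l$ and $a<b$, a position $i\in[n-1]$ belongs to $Des(\pi)$ in this order exactly when $t_i<t_{i+1}$, or when $t_i=t_{i+1}$ together with $\pi_i>\pi_{i+1}$. This converts the abstract major index $maj(\pi)=\sum_{i\in Des(\pi)}i$ into a quantity that is read off directly from the color word $(t_1,\dots,t_n)$ and the underlying permutation $|\pi|\in\mathfrak{S}_n$, which is exactly the ``practical'' computation the theorem advertises.

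With the descent criterion in hand, I would compare the two summands on the right-hand side of \eqref{fmaj2001}. The term $col(\pi)=\sum_{i=1}^{n}t_i$ depends only on the coloring, so the substance of the argument is to show that the remaining contribution to $fmaj(\pi)$ is precisely $c\cdot maj(\pi)$, i.e.\ that each descent position $i$ determined by \eqref{totalord} is weighted by $c\cdot i$. Depending on how $fmaj$ is presented, this is either a direct rearrangement of the defining sum or a short induction on $n$: appending the letter $\pi_n^{[t_n]}$ to a word in $G_{c,n-1}$ increases $col$ by $t_n$, and increases $c\cdot maj$ by $c(n-1)$ exactly when the insertion creates a descent at position $n-1$, so one checks that $fmaj$ registers these same increments.

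The step I expect to be the main obstacle is the simultaneous bookkeeping of color and value in the descent criterion, since a descent in \eqref{totalord} can be produced either by a strict increase in the color index or, at equal colors, by a strict decrease in value. One must verify that the factor $c$ attaches uniformly to every such position and that the additive color term is neither double-counted against these descents nor lost, with particular care for the color-$0$ letters, which sit at the top of \eqref{totalord}, and for the first and last positions of the word (and, in the inductive route, for the relabeling of values when passing from $[n-1]$ to $[n]$). Once this accounting is confirmed, summing the position-weighted descents with multiplicity $c$ and adding the total color $col(\pi)$ yields \eqref{fmaj2001}.
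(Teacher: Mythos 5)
There is a genuine gap here, and it is worth noting first that the paper itself offers no proof of this statement: Theorem \ref{adinroichman} is imported verbatim from Adin--Roichman \cite{adin2001} and is used in this paper as the practical \emph{formula} for computing $fmaj$, so there is no in-paper argument to compare against.

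The gap in your proposal is that you never pin down what $fmaj(\pi)$ actually is before trying to prove something about it. The content of the theorem is an equality between two \emph{a priori different} quantities: the flag major index as Adin and Roichman originally define it (via the canonical decomposition of an element of $G_{c,n}$ with respect to a distinguished generating set, equivalently via the flag of partial statistics attached to that decomposition) and the combinatorial expression $c\cdot maj(\pi)+col(\pi)$. Your first paragraph correctly unpacks the descent criterion coming from the order \eqref{totalord} --- position $i$ is a descent iff $t_i<t_{i+1}$, or $t_i=t_{i+1}$ and $\pi_i>\pi_{i+1}$ --- but that only makes the right-hand side of \eqref{fmaj2001} explicit; it says nothing about the left-hand side. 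The sentence ``Depending on how $fmaj$ is presented, this is either a direct rearrangement of the defining sum or a short induction on $n$'' defers exactly the step that constitutes the theorem, and the inductive route you sketch assumes without verification that appending $\pi_n^{[t_n]}$ changes $fmaj$ by $t_n$ plus $c(n-1)$ times the indicator of a new descent --- but checking that increment against the actual definition of $fmaj$ \emph{is} the whole proof. In its present form the argument is a plan whose central box is empty; to fill it you would need to state the original Adin--Roichman definition of $fmaj$ and carry out the comparison against it (or, if you intend \eqref{fmaj2001} to be the definition, then there is nothing to prove and the ``theorem'' is vacuous).
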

In Theorem \ref{adinroichman}, $maj(\pi)$ equals the sum $\sum_{i \in Des_c(\pi)}i$, where $Des_c(\pi):=\{i \in [n-1]: \pi_i > \pi_{i+1}\}$ is determined by considering the total ordering in (\ref{totalord}).

To illustrate Theorem \ref{adinroichman}, we will continue with our running example. Thus, we obtain that the flag-major index of $\pi=2^{[3]} ~1^{[1]} ~3~4^{[2]}~5 \in G_{4,5}$ is $fmaj(\pi)=4 \cdot 3+(3+1+2)=18$. 
Adin and Roichman in \cite{adin2001} showed that the flag major index in the case $c=1$ (i.e., for the symmetric group $\mathfrak{S}_n$) coincides
with the usual major index. It is well known from \cite{arslan2024} that the flag major index  is a Mahonian statistic, that is, 
\begin{equation}\label{eflag}
\sum_{\pi \in G_{c,n}}q^{fmaj(\pi)}=\prod_{i=1}^n [ci]_q=\sum_{\pi \in G_{c,n}} q^{L(\pi)}.
\end{equation}

Bagno and Biagioli \cite{bagno2007} defined flag-major and flag-descent statistics on the complex reflection group $G_{c,p,n}$, which is a normal subgroup of $G_{c,n}$ of index $p$. Then they established their joint distribution on $G_{c,p,n}$, which is called the \textit{Carlitz identity} for classical Weyl group. Although the statistics $inv$ and $maj$ have symmetric joint distribution 
\begin{equation} \label{jointsym}
  \sum_{\pi \in \mathfrak{S}_n}t^{inv(\pi)}q^{maj(\pi)}=\sum_{\pi \in \mathfrak{S}_n}q^{inv(\pi)}t^{maj(\pi)}  
\end{equation}
over $\mathfrak{S}_n$ \cite{foata1978}, the statistics $fmaj$ and $inv_c$ do not generally have a symmetric joint distribution over $G_{c,n}$. For instance, examining Table \ref{2} shows that the joint symmetric distribution condition in question is not satisfied for $G_{4,2}$. In \cite{ahmia2025}, Ahmia, Ramir\'ez and Villamizar gave a combinatorial interpretation of an inversion-like statistic $\widetilde{inv}$ on $G_{c,n}$, which is also known as flag-inversion and appears in Fire \cite{fire2005}, with the help of a decomposition of a colored Lehmer code 
\begin{equation} \label{tileinv}
\widetilde{inv}(\pi)=c \cdot inv(|\pi|)+col(\pi)
\end{equation}
for each $\pi \in G_{c,n}$.  It easily follows from \cite{fire2005} that $\widetilde{inv}$ is equidistributed with $fmaj$ over $G_{c,n}$. In addition, $\widetilde{inv}$ is a Mahonian statistic in terms of the length function $L$. Clearly, $\widetilde{inv}(\pi)=\widetilde{inv}(\pi^{-1})$ for each $\pi \in G_{c,n}$. Furthermore, we will assert in Conjecture \ref{symjointtilde} that the joint distribution of the statistics $\widetilde{inv}$ and $fmaj$ is symmetric over $G_{c,n}$ in the sense of (\ref{jointsym}).

For $n \geq 1$, let $D_{n}^{(c)}=\{\pi \in G_{c,n}: \pi(i)\neq i~\textrm{for}~\textrm{all}~i \in [n]\}$ be the set of all colored derangements in $G_{c,n}$. Specifically, we investigate the Mahonian polynomials defined on the set $D_{n}^{(c)}$  of all colored derangements in the group $G_{c,n}$:  
\begin{equation}\label{cfmaj}
   d_n^{(c)}(q):= \sum_{\pi \in D_{n}^{(c)}}q^{fmaj(\pi)}.
\end{equation}

The use of the length function $L$ together with $fmaj$, which we have been unable to find in the existing literature, leads to some new formulas for the signed Mahonian polynomials on the set $D_n^{(c)}$ of colored derangements in the colored permutations group $G_{c,n}$. In addition, Bagno \cite{bagno2004} introduced a length function $\ell$ on $G_{c,n}$. Signed Mahonian and Euler-Mahonian polynomials studied in the literature are mainly based on the length function $\ell$ (see \cite{chang2021, eu2021}). We remark that the length function $L$, which will be used throughout the paper, is completely different from $\ell$. Utilizing equations (\ref{eflag}) and (\ref{cfmaj}) together, we will produce an explicit formula for the signed Mahonian polynomials over $D_{n}^{(c)}$
\begin{equation}\label{scfmaj}
     \bar{d}_n^{(c)}(q):=\sum_{\pi \in D_{n}^{(c)}}(-1)^{L(\pi)}q^{fmaj(\pi)}
\end{equation}
when $c$ is even.
Gessel \cite{gessel1993} found a remarkable formula for the Mahonian polynomial over the symmetric group $\mathfrak{S}_n$, which is also known as $G_{1,n}$, by major index. Subsequently, another elegant proof was given by Wachs \cite{wachs1989} in a combinatorial way. Chow \cite{chow2006} extended Wachs' method to produce a formula for the Mahonian polynomial over the group of signed permutations $B_n$ (a Weyl group of type $B$), which can be viewed as the group $G_{2,n}$, by employing the flag major index.

In a recent work, Ji and Zhang \cite{ji2025} derived a formula for the Mahonian polynomial of derangements in the even-signed permutations group (a Weyl group of type $D$) by considering the D-major index introduced by Biagioli and Caselli in \cite{biagioli2004},  extending Wachs' approach and utilizing a refinement of Stanley's shuffle theorem produced by themselves in \cite{ji2024}. Moreover, Ji and Zhang \cite{ji2025} established an explicit formula for signed Mahonian polynomial on the set of derangements in each classical Weyl group $W$. 

This paper organized as follows. In the next section, we will provide many signed Mahonian polynomials on $G_{c,n}$ according to the $fmaj$ index together with the length function $L$ and a Mahonian statistic $\widetilde{inv}$. Moreover, we will prove that $\widetilde{inv}$ and $fmaj_{\mathcal{F}}$ have a symmetric joint distribution over $G_{c,n}$. In section 3, we will give a signed Mahonian polynomial on $D_{n}^{(c)}$ for $c$ even. Furthermore, we produce a concrete formula for the difference $d_{n}^{+(c)}-d_{n}^{-(c)}$ between the number of even and odd colored derangements in $G_{c,n}$ for every $c \geq 1$ and $n \geq 1$. We conclude with the paper by leaving an open question.

\section{The signed Mahonian polynomial over $G_{c,n}$}

In this section, our primary focus will be on deriving a formula for the sum
\begin{equation}\label{signedmahoeven}
\sum_{\pi \in G_{c,n}} (-1)^{L(\pi)} q^{fmaj(\pi)} 
\end{equation} 
where $L$ stands for the length function described by the formula (\ref{length}).
We will prove equation (\ref{signedmahoeven}) depending on the parity of $c$. To this end, we will use the formula
\begin{equation}\label{bc2012}
\sum_{\pi \in G_{c,n}} (-1)^{L(|\pi|)} q^{fmaj(\pi)}=[c]_q[2c]_{-q}[3c]_{q}[4c]_{-q}\cdots [nc]_{(-1)^{n-1}q}
\end{equation}
proven by Biagioli and Caselli in \cite{biagioli2012}. 
%by using a decomposition of $G_{c,n}$ in Proposition 4.1 of \cite{biagioli2011}.
%Equation (\ref{bc2012}) can be expressed as
%\begin{equation}\label{bce2012}
%\sum_{\pi \in G_{c,n}} (-1)^{L(|\pi|)} q^{fmaj(\pi)}= \left( \frac{1-q}{1+q}\right)^{\lfloor{\frac{n}{2}}\rfloor} \prod_{i=1}^n [ci]_q.
%\end{equation}

\begin{thm} \label{has}
   If $c$ is an even number, then we have
\begin{equation}\label{hasan}
\sum_{\pi \in G_{c,n}} (-1)^{L(\pi)} q^{fmaj(\pi)}=[c]_{-q}[2c]_q[3c]_{-q}[4c]_{q}\cdots [nc]_{(-1)^{n}q}.
\end{equation}
\end{thm}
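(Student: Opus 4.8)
The plan is to reduce the signed Mahonian polynomial weighted by the full length $L$ to the already-known Biagioli--Caselli formula (\ref{bc2012}), which is weighted by $L(|\pi|)$, through a single parity substitution $q\mapsto -q$. The engine of the argument is that when $c$ is even, the two ``extra'' contributions that distinguish $L(\pi)$ from $L(|\pi|)$ and $fmaj(\pi)$ from $col(\pi)$ both vanish modulo $2$.

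First I would compare $L(\pi)$ with $L(|\pi|)$. From the defining formula (\ref{length}) together with the identity $L(|\pi|)=inv(|\pi|)$, one has
$$L(\pi)=L(|\pi|)+col(\pi)+c\cdot\sum_{t_j\neq 0}|\{(i,j):i<j~\textrm{and}~\pi_i<\pi_j\}|.$$
Since $c$ is even, the last summand is even, and therefore $(-1)^{L(\pi)}=(-1)^{L(|\pi|)}(-1)^{col(\pi)}$ for every $\pi\in G_{c,n}$.

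Next I would invoke the Adin--Roichman decomposition (\ref{fmaj2001}), namely $fmaj(\pi)=c\cdot maj(\pi)+col(\pi)$. Again because $c$ is even, the term $c\cdot maj(\pi)$ is even, whence $(-1)^{col(\pi)}=(-1)^{fmaj(\pi)}$. Combining the two parity identities gives, term by term,
$$(-1)^{L(\pi)}q^{fmaj(\pi)}=(-1)^{L(|\pi|)}(-1)^{fmaj(\pi)}q^{fmaj(\pi)}=(-1)^{L(|\pi|)}(-q)^{fmaj(\pi)}.$$
Summing over all $\pi\in G_{c,n}$ then identifies the left-hand side of (\ref{hasan}) with the Biagioli--Caselli sum (\ref{bc2012}) evaluated at $-q$ in place of $q$.

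Finally I would perform the substitution $q\mapsto -q$ in (\ref{bc2012}). The $j$-th factor $[jc]_{(-1)^{j-1}q}$ becomes $[jc]_{(-1)^{j-1}(-q)}=[jc]_{(-1)^{j}q}$, so the whole product turns into $[c]_{-q}[2c]_{q}[3c]_{-q}[4c]_{q}\cdots[nc]_{(-1)^{n}q}$, which is exactly the right-hand side of (\ref{hasan}). I do not expect any real obstacle here: the argument is purely a bookkeeping of signs, and the only points demanding care are verifying that evenness of $c$ is used precisely in the two places above and confirming that the alternation of signs in the resulting product is shifted by one index relative to (\ref{bc2012}).
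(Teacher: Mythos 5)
Your proposal is correct and follows essentially the same route as the paper's own proof: reduce $L(\pi)$ modulo $2$ to $inv(|\pi|)+col(\pi)$ using the evenness of $c$, trade $(-1)^{col(\pi)}$ for $(-1)^{fmaj(\pi)}$ via the Adin--Roichman formula, and then apply the Biagioli--Caselli identity with $q$ replaced by $-q$. Your write-up is, if anything, slightly more explicit about the final index-shift in the product than the paper's version.
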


\begin{proof} 
Since $c$ is even and considering the length formula given in equation (\ref{length}), we deduce that $ L(\pi) \equiv \textrm{col}(\pi)+inv(|\pi|)~(\textrm{mod}~2)$ for any $\pi \in G_{c,n}$. From equations (\ref{fmaj2001}) and (\ref{bc2012}), we obtain 
\begin{align*}
\sum_{\pi \in G_{c,n}} (-1)^{L(\pi)} q^{fmaj(\pi)}&=  \sum_{\pi \in G_{c,n}} (-1)^{\textrm{col}(\pi)+inv(|\pi|)} q^{fmaj(\pi)}\\
&= \sum_{\pi \in G_{c,n}} (-1)^{L(|\pi|)} (-q)^{fmaj(\pi)}\\
&=[c]_{-q}[2c]_q[3c]_{-q}[4c]_{q}\cdots [nc]_{(-1)^{n}q}
\end{align*}
as desired.
%Assume that $U={\tau \in G_{c,n}:\tau(1)<\tau(2)<\cdots<\tau(n)}$. Biagioli and Zeng \cite{biagioli2011} demonstrated that the group $G_{c,n}$ can be written as $G_{c,n}=U \mathfrak{S}_n$ with $U \cap \mathfrak{S}_n=\{1\}$, that is, for any $\pi \in G_{c,n}$ there exist unique $\tau \in U$ and $\sigma \in \mathfrak{S}_n$ such that $\pi=\tau \sigma$. It is clear that $\textrm{col}(\pi)=\textrm{col}(\tau)$.
%Thus, we can express the formula (\ref{length}) as
%$$ L(\pi) = \textrm{col}(\pi)+c \cdot \sum_{t_j \neq 0} |\{(i,j) : i<j ~\textrm{and}~ \pi_i<\pi_j\}|+inv(|\pi|)$$ 
%for any $\pi \in G_{c,n}$.
 
\end{proof}

Equation (\ref{hasan}) can also be expressed as
\begin{equation}\label{bce2012}
\sum_{\pi \in G_{c,n}} (-1)^{L(\pi)} q^{fmaj(\pi)}= \left( \frac{1-q}{1+q}\right)^{\lfloor{\frac{n+1}{2}}\rfloor} \prod_{i=1}^n [ci]_q
\end{equation}
when $c$ is even. In Table \ref{2}, one can see the distribution of the length function $L$ and flag major index $fmaj$ on $G_{4,2}$. We will illustrate the formula given in (\ref{hasan}) in the following example.

\begin{ex}
Taking Table \ref{2} into account, it can be checked that the formula (\ref{hasan}) holds for $G_{4,2}$. Namely, we have
$$\sum_{\pi \in G_{4,2}} (-1)^{L(\pi)} q^{fmaj(\pi)}=1+q^2-q^8-q^{10}=[4]_{-q}[8]_q.$$
\end{ex}

When we set $c=2$, Theorem \ref{has} corresponds to 
$$\sum_{\pi \in B_n} (-1)^{l_B(\pi)} q^{fmaj(\pi)}=[2]_{-q}[4]_q\cdots [2n]_{(-1)^{n}q}$$
which is a result of Adin, Gessel, and Roichman for the hyperoctahedral group $B_n$ (see Theorem 5.1 in \cite{adin2005}). In \cite{adin2001}, Adin and Roichman proved that the flag major index in the case $c=2$ (i.e., for the hyperoctahedral group $B_n$) is identical with the flag major index in the group $B_n$. It should be also noted here that in the case of $c = 2$ the length function $L$ agrees with the canonical length function $l_B$ of the Weyl group $B_n$ (see \cite{bremke1997}).

\begin{center}
\captionof{table}{Distribution of the length, flag major index and $\widetilde{inv}$ on $G_{4,2}$}\label{2}
\vspace{0.1cm}
\begin{tabularx}{1,045 \textwidth} {
| >{\centering\arraybackslash}X
| >{\centering\arraybackslash}X
| >{\centering\arraybackslash}X
| >{\centering\arraybackslash}X
|| >{\centering\arraybackslash}X
| >{\centering\arraybackslash}X
| >{\centering\arraybackslash}X
| >{\centering\arraybackslash}X
| >{\centering\arraybackslash}X  
| >{\centering\arraybackslash}X
| >{\centering\arraybackslash}X
| >{\centering\arraybackslash}X | }

\hline
$\pi=\pi_1 \pi_2 $&$L(\pi)$&$fmaj(\pi)$&$\widetilde{inv}(\pi)$&$\pi=\pi_1 \pi_2 $&$L(\pi)$&$fmaj(\pi)$&$\widetilde{inv}(\pi)$\\
\hline \hline
1~~~~~~2  & 0  &0&  0 & 2~~~~~~~~$1^{[3]}$&4 &7&7\\
\hline
$1^{[1]}~~~~2 $ & 1  &1&1&$2^{[1]} ~~~~ 1^{[3]}$ &5&8& 8\\
\hline
$ 1^{[2]}~~~~2$  &2 &2&2&$2^{[2]} ~~~~ 1^{[3]}$ &6&9&9\\
\hline
$1^{[3]}~~~~2$  &3 &3&3 &$ 2^{[3]} ~~~~ 1^{[3]}$ &7&10&10\\
\hline
2~~~~~~1  &1 &4&4&$1~~~~~~~~ 2^{[1]}$  &5 &5&1\\
\hline
$ 2^{[1]}~~~~1$  &2 &1&5&$ 1^{[1]}~~~~ 2^{[1]}$  &6&2&2\\
\hline
$ 2^{[2]}~~~~1$  &3 &2&6&$ 1^{[2]}~~~~ 2^{[1]}$ &7&3&3\\
\hline
$2^{[3]}~~~~1$  &4  &3&7&$ 1^{[3]}~~~~ 2^{[1]}$ & 8&4&4\\
\hline
$2~~~~~~~~1^{[1]} $  & 2  &5&5&$1 ~~~~~~~~ 2^{[2]}$ &6&6&2\\
\hline
$ 2^{[1]}~~~~ 1^{[1]}$ & 3  &6&6&$1^{[1]} ~~~~ 2^{[2]}$ &7&7&3\\
\hline
$ 2^{[2]} ~~~~ 1^{[1]}$ &4 &3&7&$ 1^{[2]} ~~~~2^{[2]}$ &8&4&4\\
\hline
$2^{[3]} ~~~~ 1^{[1]}$ &5  &4&8&$ 1^{[3]} ~~~~ 2^{[2]}$ &9&5&5\\
\hline
$2~~~~~~~~ 1^{[2]}$ &3&6&6&$1~~~~~~~~ 2^{[3]}$  &7  &7&3\\
\hline
$ 2^{[1]} ~~~~1^{[2]}$ &4&7&7&$ 1^{[1]}~~~~ 2^{[3]} $ &8  &8&4\\
\hline
$ 2^{[2]} ~~~~1^{[2]}$ &5&8&8&$ 1^{[2]}~~~~2^{[3]}$  &9 &9 &5\\
\hline
$ 2^{[3]} ~~~~1^{[2]}$ &6&5&9&$ 1^{[3]}~~~~2^{[3]}$  &10 &6&6\\
\hline \hline
\end{tabularx}
\end{center}

When $c$ is an odd number, a brute-force computation for $c \leq 7$ and $n \leq 3$ shows that the polynomial 
$$\sum_{\pi \in G_{c,n}} (-1)^{L(\pi)} q^{fmaj(\pi)}$$ 
does not have a nice factorial-type product formula as in (\ref{hasan}).
Similarly to Theorem \ref{has}, we state the following result without proof, based on equations (\ref{fmaj2001}), (\ref{eflag}) and (\ref{tileinv}).

\begin{cor}
When $c$ is an even number, we have
\begin{equation*}
\sum_{\pi \in G_{c,n}} (-1)^{\widetilde{inv}(\pi)} q^{fmaj(\pi)}=\sum_{\pi \in G_{c,n}} (-1)^{fmaj(\pi)} q^{\widetilde{inv}(\pi)}=\prod_{i=1}^n [ci]_{{-q}}. 
\end{equation*}
\end{cor}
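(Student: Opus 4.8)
The plan is to mimic the proof of Theorem~\ref{has}, exploiting the fact that when $c$ is even every quantity of the form $c\cdot k$ is even, so that the signs attached to $\widetilde{inv}$ and to $fmaj$ both collapse onto the color statistic $col$. First I would observe that, by the defining formula (\ref{tileinv}), $\widetilde{inv}(\pi)=c\cdot inv(|\pi|)+col(\pi)$, and since $c$ is even the term $c\cdot inv(|\pi|)$ is even; hence $(-1)^{\widetilde{inv}(\pi)}=(-1)^{col(\pi)}$ for every $\pi\in G_{c,n}$. Likewise, from (\ref{fmaj2001}) we have $fmaj(\pi)=c\cdot maj(\pi)+col(\pi)$ with $c\cdot maj(\pi)$ even, so $(-1)^{fmaj(\pi)}=(-1)^{col(\pi)}$. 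The crucial consequence is the chain of equalities $(-1)^{\widetilde{inv}(\pi)}=(-1)^{col(\pi)}=(-1)^{fmaj(\pi)}$, valid for all $\pi$ whenever $c$ is even.

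For the first sum I would substitute $(-1)^{\widetilde{inv}(\pi)}=(-1)^{fmaj(\pi)}$ to rewrite $\sum_{\pi}(-1)^{\widetilde{inv}(\pi)}q^{fmaj(\pi)}=\sum_\pi(-1)^{fmaj(\pi)}q^{fmaj(\pi)}=\sum_\pi(-q)^{fmaj(\pi)}$, and then apply (\ref{eflag}) with $q$ replaced by $-q$ to obtain $\prod_{i=1}^n[ci]_{-q}$. Symmetrically, for the second sum I would use $(-1)^{fmaj(\pi)}=(-1)^{\widetilde{inv}(\pi)}$ to get $\sum_\pi(-1)^{fmaj(\pi)}q^{\widetilde{inv}(\pi)}=\sum_\pi(-q)^{\widetilde{inv}(\pi)}$, and then invoke the equidistribution of $\widetilde{inv}$ with $fmaj$, which together with (\ref{eflag}) yields $\sum_\pi q^{\widetilde{inv}(\pi)}=\prod_{i=1}^n[ci]_q$, under the substitution $q\mapsto -q$ to reach the same product $\prod_{i=1}^n[ci]_{-q}$. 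This gives both equalities at once.

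The only points requiring care are the parity bookkeeping, namely making explicit that the evenness of $c$ forces $c\cdot inv(|\pi|)$ and $c\cdot maj(\pi)$ to be even, and the legitimacy of the substitution $q\mapsto -q$ in the equidistribution identity; since that identity is a polynomial identity in $q$, the substitution is immediate. I do not anticipate a genuine obstacle here: unlike the odd-$c$ case noted after Theorem~\ref{has}, where no factorial-type product is available, the even-$c$ hypothesis is precisely what makes the sign reduce to $(-1)^{col(\pi)}$ and the whole computation collapse to a single evaluation of the Mahonian product at $-q$.
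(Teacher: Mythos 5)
Your proposal is correct and follows essentially the same route the paper intends: the authors explicitly state this corollary without proof as following ``similarly to Theorem~\ref{has}'' from equations (\ref{fmaj2001}), (\ref{eflag}) and (\ref{tileinv}), which is precisely your reduction of both signs to $(-1)^{col(\pi)}$ and the evaluation of the Mahonian product (for $fmaj$, and for the equidistributed statistic $\widetilde{inv}$) at $-q$. No gaps.
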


\begin{center}
\captionof{table}{Distribution of the length, flag major index and $\widetilde{inv}$ on $G_{3,2}$}\label{g32}
\vspace{0.1cm}

\begin{tabularx}{1,040 \textwidth} {
| >{\centering\arraybackslash}X
| >{\centering\arraybackslash}X
| >{\centering\arraybackslash}X
|>{\centering\arraybackslash}X  
|| >{\centering\arraybackslash}X
| >{\centering\arraybackslash}X
|>{\centering\arraybackslash}X
| >{\centering\arraybackslash}X
| >{\centering\arraybackslash}X
| >{\centering\arraybackslash}X
| >{\centering\arraybackslash}X | }

\hline
$\pi=\pi_1 \pi_2 $&$L(\pi)$&$fmaj(\pi)$&$\widetilde{inv}(\pi)$&$\pi=\pi_1 \pi_2 $&$L(\pi)$&$fmaj(\pi)$&$\widetilde{inv}(\pi)$\\
\hline \hline
$2~~~~~~1$  & 1  &3&3&1~~~~~~$2$ &0&0&0\\
\hline
$2^{[1]}~~~1 $ & 2  &1&4&$1^{[1]} ~~~~ 2$ &1&1&1\\
\hline
$ 2^{[2]}~~~1$  &3 &2&5&$1^{[2]} ~~~ 2$ &2&2&2\\
\hline
$2~~~~~~~~1^{[1]}$  &2 &4&4&$ 1~~~~~~~~ 2^{[1]}$ &4&4&1\\
\hline
$2^{[1]}~~~~1^{[1]}$  &3 &5&5&$1^{[1]}~~~~ 2^{[1]}$  &5 &2&2\\
\hline
$ 2^{[2]}~~~~1^{[1]}$  &4 &3&6&$ 1^{[2]}~~~~ 2^{[1]}$  &6&3&3\\
\hline
$ 2~~~~~~~~1^{[2]}$  &3 &5&5&$1~~~~~~~~ 2^{[2]}$ &5&5&2\\
\hline
$2^{[1]}~~~~1^{[2]}$  &4  &6&6&$ 1^{[1]}~~~~ 2^{[2]}$ & 6&6&3\\
\hline
$2^{[2]}~~~~1^{[2]} $  & 5 &7&7&$1^{[2]} ~~~~ 2^{[2]}$ &7&4&4\\
\hline \hline
\end{tabularx}
\end{center}

%\begin{tabularx}{1 \textwidth} { 
%  | >{\centering\arraybackslash}X 
%  | >{\centering\arraybackslash}X 
%  | >{\centering\arraybackslash}X 
%  || >{\centering\arraybackslash}X 
%  | >{\centering\arraybackslash}X 
%  | >{\centering\arraybackslash}X 
%  | >{\centering\arraybackslash}X 
%  | >{\centering\arraybackslash}X 
%  | >{\centering\arraybackslash}X | }
%\hline
%$\pi=\pi_1 \pi_2 $&$L(\pi)$&$fmaj(\pi)$&$ \pi=\pi_1 \pi_2$  &$L(\pi)$&$fmaj(\pi)$\\
% \hline \hline
%$2~~~~~~1$  & 1  &3&1~~~~~~$2$ &0&0\\
%\hline
%$2^{[1]}~~~~1 $ & 2  &1&$1^{[1]} ~~~~ 2$ &1&1\\
%\hline
%$ 2^{[2]}~~~~1$  &3 &2&$1^{[2]} ~~~~ 2$ &2&2\\
%\hline
%$~~2~~~~~~~1^{[1]}$  &2 &4&$ ~~1~~~~~~ 2^{[1]}$ &4&4\\
%\hline
%$~~2^{[1]}~~~~~~1^{[1]}$  &3 &5&$~~1^{[1]}~~~~~~ 2^{[1]}$  &5 &2\\
%\hline
%$ ~~2^{[2]}~~~~~~1^{[1]}$  &4 &3&$ ~~1^{[2]}~~~~~~ 2^{[1]}$  &6&3\\
%\hline
%$~~ 2~~~~~~1^{[2]}$  &3 &5&$~~1~~~~~~ 2^{[2]}$ &5&5\\
%\hline
%$~~2^{[1]}~~~~1^{[2]}$  &4  &6&$ ~~1^{[1]}~~~~ 2^{[2]}$ & 6&6\\
%\hline
%$~~2^{[2]}~~~~~~1^{[2]} $  & 5 &7&$~~1^{[2]} ~~~~~~ 2^{[2]}$ &7&4\\
%\hline \hline
%\end{tabularx}
%\end{center}

%We verified that the relation
%\begin{equation*}
%\sum_{\pi \in G_{c,n}} (-1)^{\widetilde{inv}(\pi)} q^{fmaj(\pi)}=\prod_{i=1}^n [ci]_{{-q}}. 
%\end{equation*}
%holds for every groups $G_{3,2}$, $G_{5,2}$, $G_{7,2}$ and $G_{3,3}$. This yields to deduce the following conjecture.
\vspace{0.3cm}
The next result immediately follows from the equations (13), (14) and (15) which occur in the proof of Theorem 4.4 in \cite{biagioli2012}.

\begin{cor}\label{conj1}
When $c$ is an odd number, we have
\begin{align*}
 &\sum_{\pi \in G_{c,n}} (-1)^{\widetilde{inv}(\pi)} q^{fmaj(\pi)}=\\
     &\begin{cases}
       [c]_{q}[2c]_{-q}[3c]_{q}[4c]_{-q}\cdots [nc]_{(-1)^{n-1}q},&\quad\text{if $n$ is even}\\
      [c]_{-q}[c]_{q}[2c]_{-q}[3c]_{q}[4c]_{-q}\cdots [(n-2)c]_{q}[(n-1)c]_{-q}[n]_{q^c},&\quad\text{if $n$ is odd.}\\
     \end{cases}
\end{align*}
\end{cor}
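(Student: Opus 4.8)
The plan is to reduce the statement, for odd $c$, to the signed flag-major evaluation of Biagioli and Caselli and then to read off the two cases by a parity bookkeeping on a single family of factors. First I would simplify the sign. Since $c$ is odd we have $(-1)^{c\cdot inv(|\pi|)}=(-1)^{inv(|\pi|)}$, so from $\widetilde{inv}(\pi)=c\cdot inv(|\pi|)+col(\pi)$ one gets $(-1)^{\widetilde{inv}(\pi)}=(-1)^{inv(|\pi|)}(-1)^{col(\pi)}$. Splitting $fmaj(\pi)=c\cdot maj(\pi)+col(\pi)$ as in (\ref{fmaj2001}) then rewrites the target sum as $\sum_{\pi\in G_{c,n}}(-1)^{inv(|\pi|)}q^{c\cdot maj(\pi)}(-q)^{col(\pi)}$. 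In other words it is exactly the signed sum computed by Biagioli--Caselli in (\ref{bc2012}), except that the color weight $q^{col(\pi)}$ has been replaced by $(-q)^{col(\pi)}$.

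The key input is that the Biagioli--Caselli computation is really \emph{color-refined}: their equations (13)--(15) evaluate $\sum_{\pi}(-1)^{inv(|\pi|)}p^{col(\pi)}q^{c\cdot maj(\pi)}$ as a product of a color factor $\prod_{i=1}^n[c]_{(-1)^{i-1}p}$ (depending only on $p$) and a permutation factor $\prod_{i=1}^n[i]_{(-1)^{i-1}q^c}$ (a signed type-$A$ Mahonian polynomial evaluated at $q^c$). Specializing $p=q$ returns Theorem 4.4 of \cite{biagioli2012} via the elementary identity $[ic]_{(-1)^{i-1}q}=[c]_{(-1)^{i-1}q}\,[i]_{(-1)^{i-1}q^c}$, which holds because $c$ is odd and hence $((-1)^{i-1}q)^c=(-1)^{i-1}q^c$. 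Our sum is the specialization $p=-q$, which replaces the color factor by $\prod_{i=1}^n[c]_{(-1)^{i}q}$ and leaves the permutation factor unchanged; so the target equals $\prod_{i=1}^n[c]_{(-1)^{i}q}\cdot\prod_{i=1}^n[i]_{(-1)^{i-1}q^c}$.

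Finally I would run the parity bookkeeping. When $n$ is even the two color products $\prod_i[c]_{(-1)^{i-1}q}$ and $\prod_i[c]_{(-1)^{i}q}$ contain $[c]_q$ and $[c]_{-q}$ with the same multiplicity $n/2$, so they agree; the answer therefore coincides with (\ref{bc2012}), which is the first displayed case. When $n$ is odd the sign swap converts exactly one factor $[c]_q$ into $[c]_{-q}$, so $\prod_i[c]_{(-1)^iq}=[c]_{-q}\cdot\prod_{i=1}^{n-1}[c]_{(-1)^{i-1}q}$; recombining $[c]_{(-1)^{i-1}q}[i]_{(-1)^{i-1}q^c}=[ic]_{(-1)^{i-1}q}$ for $1\le i\le n-1$ and isolating the leftover $[c]_{-q}$ together with the surviving $i=n$ permutation factor $[n]_{(-1)^{n-1}q^c}=[n]_{q^c}$ produces $[c]_{-q}\prod_{i=1}^{n-1}[ic]_{(-1)^{i-1}q}\,[n]_{q^c}$, which is precisely the second displayed case.

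The main obstacle is the middle step: one must be sure that the substitution may be carried out in the color variable alone, i.e.\ that equations (13)--(15) genuinely decouple the color sum from the $maj$-computation at the level of generating functions. This is a real feature of the Biagioli--Caselli calculation rather than a formal identity, since $maj(\pi)$ is taken with respect to the color-dependent order (\ref{totalord}), so $maj(\pi)$ and $col(\pi)$ are a priori entangled. Granting the decoupling, the remaining work---the factorization $[ic]_w=[c]_w[i]_{w^c}$ and the regrouping in the odd case---is routine, and a sanity check against Table \ref{g32} (the case $c=3$, $n=2$, where the color factor is even in $q$ and the answer indeed reduces to (\ref{bc2012})) confirms the bookkeeping.
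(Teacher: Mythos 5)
Your argument is correct and follows the same route as the paper, which offers no proof beyond the remark that the corollary ``immediately follows from equations (13), (14) and (15)'' of Biagioli--Caselli; your reduction of the sign via $(-1)^{\widetilde{inv}(\pi)}=(-1)^{inv(|\pi|)+col(\pi)}$ for odd $c$, the specialization $p=-q$ in the color factor, and the parity bookkeeping using $[ic]_w=[c]_w[i]_{w^c}$ supply exactly the details the paper omits, and they check out against Table \ref{g32}. The only point resting on trust is the precise color-refined form of (13)--(15), which you flag honestly and which is the same external input the paper itself invokes.
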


Now, we will give the following example to illustrate Corollary \ref{conj1} based on Table \ref{g32}.

\begin{ex}
Considered Table \ref{g32}, we get
$$\sum_{\pi \in G_{3,2}} (-1)^{\widetilde{inv}(\pi)} q^{fmaj(\pi)}=1+q^2-q^3+q^4-q^5-q^7=[3]_q[6]_{-q}.$$
\end{ex}

If we substitute $c \rightarrow 1$ in Corollary \ref{conj1}, then we encounter the polynomial
\begin{equation}\label{gessel-simon}
 \sum_{\pi \in \mathfrak{S}_n} (-1)^{inv(\pi)} q^{maj(\pi)}=[1]_q[2]_{-q}\cdots [n]_{(-1)^{n-1}q} 
\end{equation}
which is nothing else but the result constructed by Gessel and Simon for the symmetric group $\mathfrak{S}_n$ (see Corollary 2 in \cite{wachs1992}). We also realized that  for some $c$ odd numbers the polynomial $\sum_{\pi \in G_{c,n}} (-1)^{fmaj(\pi)} q^{\widetilde{inv}(\pi)}$ has the same formula as in Corollary \ref{conj1}. For instance, this situation may be checked for each group $G_{3,2}, ~G_{5,2},~G_{7,2}$ and $G_{3,3}$. In addition, we observed that the joint distribution of the permutation statistics $\widetilde{inv}$ and $fmaj$ on $G_{c,n}$ is symmetric for some values of $c$ and $n$. Therefore, the above observation allows us to pose the following conjecture, which is computationally confirmed for every group $G_{c,n}$ with $1 \leq c \leq 7$ and $n=2$ as well as for the groups $G_{3,3}$ and $G_{4,3}$ (see Appendix A).

%{\color{red}We observed that the joint distribution of the permutation statistics $\widetilde{inv}$ and $fmaj$ on $G_{c,n}$ is symmetric for some values of $c$ and $n$. Therefore, the above observation allows us to deduce the following conclusion/conjecture. (allows us to conjecture the following formula./allows us to pose the following conjecture, which was computationally verified for $1 \leq c \leq 7$ and $n=2,3$.)}

\begin{conj}\label{symjointtilde}
The statistic $\widetilde{inv}$ has a symmetric joint distribution with $fmaj$ over $G_{c,n}$. That is, 
\begin{equation*}
  \sum_{\pi \in G_{c,n}}t^{\widetilde{inv}(\pi)}q^{fmaj(\pi)}=\sum_{\pi \in G_{c,n}}q^{\widetilde{inv}(\pi)}t^{fmaj(\pi)}.  
\end{equation*}
\end{conj}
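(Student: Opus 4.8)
The plan is to reduce Conjecture~\ref{symjointtilde} to a single three-variable symmetry and then attack that symmetry combinatorially. Recall from (\ref{tileinv}) and (\ref{fmaj2001}) that both statistics split as $\widetilde{inv}(\pi)=c\cdot inv(|\pi|)+col(\pi)$ and $fmaj(\pi)=c\cdot maj(\pi)+col(\pi)$, where $maj(\pi)=\sum_{i\in Des_c(\pi)}i$ is the colored major index computed with respect to the order (\ref{totalord}). Introduce the refined generating function
\[
F(x,y,z):=\sum_{\pi\in G_{c,n}} x^{inv(|\pi|)}\,y^{maj(\pi)}\,z^{col(\pi)}.
\]
Then $\sum_{\pi}t^{\widetilde{inv}(\pi)}q^{fmaj(\pi)}=F(t^{c},q^{c},tq)$ and $\sum_{\pi}q^{\widetilde{inv}(\pi)}t^{fmaj(\pi)}=F(q^{c},t^{c},tq)$, so Conjecture~\ref{symjointtilde} follows at once from the stronger claim that $F$ is symmetric in its first two arguments, $F(x,y,z)=F(y,x,z)$; equivalently, the triple $\bigl(inv(|\pi|),\,maj(\pi),\,col(\pi)\bigr)$ is equidistributed with $\bigl(maj(\pi),\,inv(|\pi|),\,col(\pi)\bigr)$ over $G_{c,n}$. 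A direct check confirms this refined symmetry already for $G_{3,2}$, and when $c=1$ it is exactly the classical Foata--Sch\"utzenberger symmetry (\ref{jointsym}), which I would use both as the base case and as the engine of the general argument.

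For the refined claim I see two routes. The bijective route seeks a color-preserving map $\Psi\colon G_{c,n}\to G_{c,n}$ with $inv(|\Psi(\pi)|)=maj(\pi)$, $maj(\Psi(\pi))=inv(|\pi|)$ and $col(\Psi(\pi))=col(\pi)$, that is, a colored analogue of the Foata--Sch\"utzenberger bijection that transports colors along with the letters they decorate. The generating-function route fixes the underlying permutation and sums over color vectors first: writing $\mathbf{t}=(t_1,\dots,t_n)\in\mathbb{Z}_c^n$ and using the descent rule read off from (\ref{totalord}) (position $i$ is a colored descent exactly when $t_i<t_{i+1}$, or $t_i=t_{i+1}$ with $\sigma_i>\sigma_{i+1}$), one obtains $F(x,y,z)=\sum_{\sigma\in\mathfrak{S}_n}x^{inv(\sigma)}\,G_{Des(\sigma)}(y,z)$, where the inner factor is a transfer-matrix sum over $\mathbf{t}$ depending on $\sigma$ only through its descent set. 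I would compute $G_{S}(y,z)$ explicitly and then reduce the $x\leftrightarrow y$ symmetry of $F$ to a refined statement about the joint distribution of $(inv,Des)$ over $\mathfrak{S}_n$, ultimately feeding the result back into (\ref{jointsym}).

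The main obstacle, on either route, is the coupling between the colors and the underlying permutation that is built into $maj(\pi)$ through the order (\ref{totalord}): of the two statistics to be exchanged, $inv(|\pi|)$ ignores colors entirely, while $maj(\pi)$ depends on them in an essential way. Consequently the naive strategy of applying the classical Foata--Sch\"utzenberger bijection to $|\pi|$ with the colors held fixed fails, since that bijection alters $Des(\sigma)$ and hence $maj(\pi)$ in an uncontrolled manner. Making the bijection act on the colored word as a whole---equivalently, controlling how $G_{S}(y,z)$ interacts with the $inv$-distribution across different descent classes---is the crux, and is the step where I expect to have to do the real work; the reduction in the first paragraph and the base case $c=1$ are routine.
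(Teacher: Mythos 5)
First, note that the statement you are addressing is stated in the paper as Conjecture \ref{symjointtilde}: the authors give no proof, only computational verification for $1\le c\le 7$ with $n=2$ and for $G_{3,3}$, $G_{4,3}$, together with the remark that the natural strategy is to exhibit a bijection $\psi$ of $G_{c,n}$ interchanging $\widetilde{inv}$ and $fmaj$. So there is no proof in the paper to compare against, and your submission must be judged on its own terms; as such it is not a proof. Your opening reduction is correct and genuinely useful: since $\widetilde{inv}(\pi)=c\cdot inv(|\pi|)+col(\pi)$ and $fmaj(\pi)=c\cdot maj(\pi)+col(\pi)$ share the same color term, the conjectured identity is $F(t^{c},q^{c},tq)=F(q^{c},t^{c},tq)$ for $F(x,y,z)=\sum_{\pi}x^{inv(|\pi|)}y^{maj(\pi)}z^{col(\pi)}$, and it would follow from the symmetry $F(x,y,z)=F(y,x,z)$, i.e.\ from the equidistribution of $\bigl(inv(|\pi|),maj(\pi),col(\pi)\bigr)$ and $\bigl(maj(\pi),inv(|\pi|),col(\pi)\bigr)$ over $G_{c,n}$. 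Your descent rule read off from (\ref{totalord}) is also correct, and a color-preserving bijection swapping $inv(|\pi|)$ and $maj(\pi)$ would indeed specialize to the interchanging bijection $\psi$ the authors ask for.

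The gap is that the refined symmetry is never established: both of your routes stop exactly where the real work begins, as you yourself acknowledge. No colored Foata--Sch\"{u}tzenberger bijection is constructed, the transfer-matrix factor $G_{S}(y,z)$ is not computed, and the required compatibility between the $inv$-distribution and the decomposition into descent classes is not proved; the base case $c=1$ and the check for $G_{3,2}$ do not substitute for a general argument. Note also that $F(x,y,z)=F(y,x,z)$ is a priori strictly stronger than the conjecture, since the specialization $(x,y,z)=(t^{c},q^{c},tq)$ only sweeps out a two-parameter family inside three-dimensional space; if you pursue this route you must either prove the stronger statement or justify passing to it. As it stands the proposal is a reasonable research plan --- arguably a sharper formulation of the problem than the one in the paper --- but the conjecture remains open.
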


It is important to note here that Conjecture \ref{symjointtilde} may be considered as a generalization of the equality (\ref{jointsym}). The best way to prove that the statistics $\widetilde{inv}$ and $fmaj$ on $G_{c,n}$ have a symmetric joint distribution is to find, if possible, an explicit bijection $\psi : G_{c,n} \rightarrow G_{c,n}$ such that for all $ \pi \in G_{c,n}$ 
$$\widetilde{inv}(\pi)=fmaj(\psi(\pi))~~\textrm{and}~~fmaj(\pi)=\widetilde{inv}(\psi(\pi)).$$
In other words, one needs to show that $\psi$ interchanges the two statistics $\widetilde{inv}$ and $fmaj$.

Now we consider the \textit{friends ordering} $\mathcal{F}$ in \cite{fire2005}, where the colors have no effect on many statistics on $G_{c,n}$:
\begin{equation*}
\mathcal{F}:~~1^{[c-1]}<\cdots<1<\cdots<2^{[c-1]}<\cdots<2<\cdots<n^{[c-1]}<\cdots < n.
\end{equation*}
Thus, we notice that $fmaj_{\mathcal{F}}(\pi)=c\cdot maj(\pi)+col(\pi)=c\cdot maj(|\pi|)+col(\pi)$ and $\widetilde{inv}_{\mathcal{F}}(\pi)=\widetilde{inv}(\pi)$ for any $\pi \in G_{c,n}$. It is also known from \cite{fire2005} that $fmaj_{\mathcal{F}}(\pi)$ and $\widetilde{inv}(\pi)$ are equidistributed over $G_{c,n}$. It is not hard to see that 
$fmaj_{\mathcal{F}}$ coincides with $\widetilde{inv}$ on $G_{c,2}$.

\begin{prop}\label{jdfriends}
The statistic $\widetilde{inv}$ has a symmetric joint distribution with $fmaj_{\mathcal{F}}$ over $G_{c,n}$. That is, 
\begin{equation*}
  \sum_{\pi \in G_{c,n}}t^{\widetilde{inv}(\pi)}q^{fmaj_{\mathcal{F}}(\pi)}=\sum_{\pi \in G_{c,n}}q^{\widetilde{inv}(\pi)}t^{fmaj_{\mathcal{F}}(\pi)}.  
\end{equation*}
\end{prop}
\begin{proof}
Assume that $U=\{\tau \in G_{c,n}:\tau(1)<\tau(2)<\cdots<\tau(n)\}$, where the ordering is based on $\mathcal{F}$. Due to Biagioli and Zeng \cite{biagioli2011}, the group $G_{c,n}$ can be written as $G_{c,n}=U \mathfrak{S}_n$ with $U \cap \mathfrak{S}_n=\{1\}$, that is, for any $\pi=\pi_1^{[t_1]} ~\pi_2^{[t_2]} \cdots  \pi_n^{[t_n]} \in G_{c,n}$ with $0\leq t_i \leq c-1$ for all $i=1,\ldots,n$ there exist unique $\tau=1^{[t_1]} ~2^{[t_2]}\cdots  n^{[t_n]} \in U$ such that $\pi=\tau |\pi|$. It is clear that $\textrm{col}(\pi)=\textrm{col}(\tau)$ and $\textrm{maj}(\pi)=\textrm{maj}(|\pi|)$ according to total ordering $\mathcal{F}$. We conclude the desired result from equation (\ref{jointsym}) that
\begin{align*}
  \sum_{\pi \in G_{c,n}}t^{\widetilde{inv}(\pi)}q^{fmaj_{\mathcal{F}}(\pi)}&=\left(\sum_{\tau \in U}t^{col(\tau)}q^{col(\tau)}\right)\left(\sum_{|\pi| \in \mathfrak{S}_n}t^{c \cdot inv(|\pi|)}q^{c \cdot maj(|\pi|)}\right)  \\
  &=\left(\sum_{\tau \in U}t^{col(\tau)}q^{col(\tau)}\right)\left(\sum_{|\pi| \in \mathfrak{S}_n}t^{c \cdot maj(|\pi|)}q^{c \cdot inv(|\pi|)}\right)  \\ 
  &=\sum_{\pi \in G_{c,n}}t^{fmaj_{\mathcal{F}}(\pi)}q^{\widetilde{inv}(\pi)}.
\end{align*}

\end{proof}

\begin{prop}\label{sgnmhninvtilde}
We have 
\begin{align*}
  \sum_{\pi \in G_{c,n}}(-1)^{\widetilde{inv}(\pi)}q^{fmaj_{\mathcal{F}}(\pi)}=
     &\begin{cases}
       \prod_{i=1}^n [ic]_{-q},&\quad\text{if $c$ is even}\\
      \left([c]_{-q}\right)^n\prod_{i=1}^{n} [i]_{(-1)^{i-1} q^c} &\quad\text{if $c$ is odd.}
     \end{cases}
\end{align*}.  
\end{prop}

\begin{proof}
The first case is clearly visible when $c$ is even. Suppose that c is odd. Then we can write from equation (\ref{gessel-simon})
\begin{align*}
  \sum_{\pi \in G_{c,n}}(-1)^{\widetilde{inv}(\pi)}q^{fmaj_{\mathcal{F}}(\pi)}&=\left(\sum_{\tau \in U}(-q)^{col(\tau)}\right)\left(\sum_{|\pi| \in \mathfrak{S}_n}sign(|\pi|)q^{c \cdot maj(|\pi|)}\right)  \\
  &=\left(\sum_{\tau \in U}(-q)^{col(\tau)}\right)[1]_{q^c}[2]_{-{q^c}}\cdots [n]_{(-1)^{n-1}{q^c}}   \\ 
  &=\left(\sum_{c_1+\cdots +c_n=0}^{c-1}(-q)^{c_1+\cdots +c_n}\right)[1]_{q^c}[2]_{-{q^c}}\cdots [n]_{(-1)^{n-1}{q^c}}\\
  &=\left( \sum_{k=0}^{c-1}(-q)^k\right)^n [1]_{q^c}[2]_{-{q^c}}\cdots [n]_{(-1)^{n-1}{q^c}}\\
   &=\left([c]_{-q}\right)^n \prod_{i=1}^{n} [i]_{(-1)^{i-1} q^c}.
\end{align*}
\end{proof}

 As a consequence of Proposition \ref{sgnmhninvtilde}, we can immediately deduce that
\begin{equation*}
\sum_{\pi \in G_{c,n}} (-1)^{L(\pi)} q^{fmaj_{\mathcal{F}}(\pi)}=\left([c]_{-q}\right)^n\prod_{i=1}^{n} [i]_{(-1)^{i-1} q^c}.
\end{equation*}   
when $c$ is even number.

\section{The signed Mahonian polynomial over $D_{n}^{(c)}$}

In this section, we will first give the structure of signed Mahonian polynomial $\bar{d}_{n}^{(c)}(q)=\sum_{\alpha \in D_{n}^{(c)}}(-1)^{L(\alpha)}q^{fmaj(\alpha)}$ over $D_{n}^{(c)}$ when $c$ is even. Subsequently, we will derive the difference between the number of even and odd colored derangements in $G_{c,n}$ for $c$ even.

Following Wachs' work \cite{wachs1989}, for any colored permutation $\pi \in G_{c,A}$, where $A=\{0<a_1<a_2<\cdots < a_k\}$, the \textit{reduction} of $\pi$ to the non-fixed points is defined to be the colored permutation $dp(\pi) \in G_{c,k}$ by replacing each letter $\pi_j^{[r_j]}$ by $i^{[r_j]}$ if $\pi_j=a_i,~i=1,2,\dots,k$. Thus, the \textit{derangement part} $dp(\pi)$ of a colored permutation $\pi \in G_{c,n}$ is the reduction of subword of non-fixed points of $\pi$. For example, $dp(2^{[3]} ~1^{[1]} ~3~4^{[2]}~5)=2^{[3]} ~1^{[1]} ~3^{[2]}$. It should be noted here that the derangement part of a colored permutation is a colored derangement, and conversely, any derangement in $D_{n}^{(c)}$ and a subset having $n-k$ elements of $[n]$ give rise to a unique colored permutation in $G_{c,n}$ with $n-k$ fixed points. 

Now let $\pi=\pi_1^{[t_1]} ~\pi_2^{[t_2]} \cdots  \pi_n^{[t_n]} \in G_{c,n}$. A letter $\pi_i^{[t_i]}$ is called a \textit{subcedant} (respectively an \textit{excedant}) of $\pi$ if $\pi_i<i$ (respectively, $\pi_i>i$) with respect to the total ordering in (\ref{totalord}). It is obvious that all excedants of $\pi$ are positive. Let $s(\pi)$ and $e(\pi)$ denote the number of subcedants and excedants of $\pi$, respectively. Let $\tilde{\pi}$ be a colored permutation having $k$  letters obtained from $\pi \in G_{c,k}$ with $k \leq n$ in the following order:
\begin{enumerate}
    \item [$\bullet$] Replace $i$th smallest (in absolute value) subcedant  $\pi_j^{[t_j]}$ of $\pi$ by $i^{[t_j]}$,~$i=1,2,\ldots,s(\pi)$.
    \item [$\bullet$] Assign $i$th smallest fixed point of $\pi$ to $s(\pi)+1$,~~$i=1,2,\ldots,k-s(\pi)-e(\pi)$.
    \item [$\bullet$] Write $n-i+1$ instead of $i$th largest excedant of $\pi$, where $i$ ranges from $1$ to $e(\pi)$.
\end{enumerate}
Note that if $k=n$, then $\tilde{\pi} \in G_{c,n}$. Moreover, if $\pi \in D_{n}^{(c)}$ then $\tilde{\pi} \in D_{A}^{(c)}$, where $A=\{1,2,\ldots,s(\pi)\}\cup \{n-e(\pi)+1,n-e(\pi)+2,\ldots,n\}$. For instance, the collections of subcedants, excedants and fixed points of $\pi=2 ~1^{[1]} ~3~4^{[2]}~5~7~6^{[2]}$ are, respectively, $\{1^{[1]}, 4^{[2]}, 6^{[2]}\}$, $\{2, 7\}$ and $\{3,5\}$. Since the above construction, $\pi$ corresponds to $\tilde{\pi}=6~3^{[1]}~4~2^{[2]}~5~7~1^{[2]}$. One can observe that descent set of $\pi$ and $\tilde{\pi}$ are the same. This is not an accident.

\begin{lem}\label{firstlem}
Let $\pi \in G_{c,k}$ with $k \leq n$. Then $Des_c(\pi)=Des_c(\tilde{\pi})$ and $col(\pi)=col(\tilde{\pi})$.
\end{lem}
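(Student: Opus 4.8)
The two assertions are of quite different character, so I would treat them separately. The equality $col(\pi)=col(\tilde\pi)$ is the easy one: each of the three relabelling steps acts only on the \emph{absolute value} of a letter and never alters its color, so the color $t_j$ at every position $j$ is literally unchanged in passing from $\pi$ to $\tilde\pi$. Summing over $j$ gives $col(\pi)=col(\tilde\pi)$ at once. I would record the stronger position-wise statement just used, since it is the linchpin for the rest: $\tilde\pi$ carries exactly the same color at position $i$ as $\pi$ does, for every $i$.

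For the descent sets, fix $i\in[k-1]$ and analyze the comparison of the $i$-th and $(i+1)$-st letters, using that for two letters $a^{[s]},b^{[r]}$ one has $a^{[s]}>b^{[r]}$ in the order (\ref{totalord}) iff $s<r$, or $s=r$ and $a>b$. Since colors are preserved position-wise, I would split into three cases. If $t_i\neq t_{i+1}$, the comparison is decided by the colors alone, in both $\pi$ and $\tilde\pi$, so $i$ is a descent of one iff it is a descent of the other. If $t_i=t_{i+1}=t\neq0$, then both positions carry subcedants (any colored letter lies below the uncolored $j$ and is therefore a subcedant), and the comparison reduces to absolute values; as the first relabelling step sends the $i$-th smallest subcedant to absolute value $i$, it is monotone in absolute value, so the inequality between $\pi_i$ and $\pi_{i+1}$ is preserved.

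The remaining, genuinely combinatorial, case is $t_i=t_{i+1}=0$, where both letters are uncolored and each position is of one of the three types subcedant ($S$), fixed point ($F$), excedant ($E$). Here I would run through the $3\times3$ table of type pairs. The quantitative input is that the relabelling is monotone increasing \emph{within} each type (subcedants to $\{1,\dots,s\}$ by absolute value, fixed points to $\{s+1,\dots,s+f\}$ with $f=k-s-e$, excedants to $\{n-e+1,\dots,n\}$ with the largest excedant sent to $n$), together with the \emph{separation} of the three blocks of new labels: a subcedant label is $\le s$, a fixed-point label lies in $[s+1,s+f]$, and an excedant label is $\ge n-e+1$. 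Using $s+f=k-e\le n-e$, these blocks are strictly ordered $S<F<E$ in $\tilde\pi$. I would then match the nine type pairs: two letters of the same type are compared by the monotone within-type rule, so the verdict is unchanged; for two different types the verdict is forced in $\tilde\pi$ by $S<F<E$ and in $\pi$ by the defining positional inequalities ($\pi_j<j$, $\pi_j=j$, $\pi_j>j$) — for instance an excedant immediately before a subcedant or a fixed point, or a fixed point before a subcedant, is a descent in both, while a subcedant before a fixed point or excedant, or a fixed point before an excedant, is a non-descent in both. Exhausting the nine cases yields $Des_c(\pi)=Des_c(\tilde\pi)$.

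The main obstacle is exactly this last case: one must confirm that the original positional inequalities and the block/monotonicity structure of the relabelling produce identical verdicts across all nine type combinations, and in particular pin down the boundary inequalities $s<s+1$ and $s+f\le n-e<n-e+1$ carefully, so that the separation $S<F<E$ is strict. This is also the one and only place where the hypothesis $k\le n$ is used, through $s+f+e=k\le n$. Everything else — color preservation and the two equal-color cases — is essentially immediate once the position-wise color invariance is in hand.
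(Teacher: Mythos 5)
Your proof is correct and takes essentially the same route as the paper, which disposes of $col(\pi)=col(\tilde\pi)$ by noting the construction never touches colors and then settles $Des_c(\pi)=Des_c(\tilde\pi)$ by citing "the same arguments as in the proof of Lemma 1 of Wachs" --- arguments you actually write out in full (comparison by color when colors differ, within-type monotonicity, and the block separation $S<F<E$ via $s+f+e=k\le n$). One point worth flagging: your argument depends on the subcedant relabelling being order-preserving in absolute value, which is what the paper's written rule states and is the reading under which the lemma is true, whereas the paper's worked example ($6^{[2]}\mapsto 1^{[2]}$, $4^{[2]}\mapsto 2^{[2]}$, $1^{[1]}\mapsto 3^{[1]}$) silently uses Assaf's reversed ordering, so your interpretation is the correct one to adopt.
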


\begin{proof}
It is easy to see from the construction of $\tilde{\pi}$ that the colors remain the same. Based on the total ordering in (\ref{totalord}), we achieved $Des_c(\pi)=Des_c(\tilde{\pi})$ by following the same arguments as in the proof of Lemma 1 in \cite{wachs1989}.
\end{proof}

Given two disjoint permutations $\tau= \tau_1\cdots \tau_j$ and $\sigma= \sigma_1\cdots \sigma_k$, a \textit{shuffle} of $\tau$ and $\sigma$ is a permutation $w=w_1\cdots w_{j+k}$ such that $\tau$ and $\sigma$ appear as subsequences in $w$. The collection of all shuffles of $\tau$ and $\sigma$ is denoted by $sh(\tau,\sigma)$. Garsia and Gessel \cite{garsia1979} showed that the weight of the major index on $sh(\tau,\sigma)$ is
\begin{equation}\label{garsiagessel}
\sum_{w \in sh(\tau,\sigma)}q^{maj(w)}=q^{maj(\tau)+maj(\sigma)} \genfrac{[}{]}{0pt}{}{j+k}{k}_q.  
\end{equation}
Equation (\ref{garsiagessel}) remains true if $\tau$ and $\sigma$ are colored words of different letters from the alphabet in equation (\ref{totalord}). It is apparent that if $\pi \in sh(\tau,\sigma)$ we have $col(\pi)=col(\tau)+col(\sigma)$. Considering the definition of flag major index in (\ref{fmaj2001}), it is not hard to see that the following identity is true:
\begin{equation}\label{cgarsiagessel}
\sum_{\pi \in sh(\tau,\sigma)}q^{fmaj(\pi)}=q^{fmaj(\tau)+fmaj(\sigma)} {j+k \brack k}_{q^c}.  
\end{equation}

The above construction of $\tilde{\pi}$ from $\pi$ yields to following bijection, proved by Assaf in \cite{assaf2010}, is a  generalized version of Theorem 2 in \cite{wachs1989} for the $G_{c,n}$. In the proof of the following lemma, Assaf used the total ordering
\begin{equation*}
n^{[c-1]}<\cdots<1^{[c-1]}<\cdots<n^{[2]}<\cdots<1^{[2]}<n^{[1]}<\cdots<1^{[1]}<1<\cdots < n
\end{equation*}
instead of the total ordering in equation (\ref{totalord}). However, we note that Lemma \ref{secondlem} still remains true when used the total ordering in equation (\ref{totalord}).

\begin{lem} [Assaf \cite{assaf2010}]\label{secondlem}
Let $\alpha \in D_{k}^{(c)}$ with $k \leq n$ and $\gamma=(s(\alpha)+1)\dots (s(\alpha)+k)$. Then the map $\phi: \{\pi \in G_{c,n}: dp(\pi)=\alpha\} \rightarrow sh(\tilde{\alpha},\gamma)$ defined to be $\phi(\pi)=\tilde{\pi}$ is a bijection such that $Des_c(\phi(\pi))=Des_c(\pi)$ and $col(\phi(\pi))=col(\pi)$.
\end{lem}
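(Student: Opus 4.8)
The plan is to follow the approach of Wachs \cite{wachs1989} and Assaf \cite{assaf2010}, re-running the relabelling argument for the total order (\ref{totalord}). Observe first that the two asserted identities $Des_c(\phi(\pi))=Des_c(\pi)$ and $col(\phi(\pi))=col(\pi)$ are, since $\phi(\pi)=\tilde\pi$, precisely the content of Lemma \ref{firstlem} (applied in $G_{c,n}$). Hence the substance of the proof is to show that $\phi$ maps into $sh(\tilde\alpha,\gamma)$ and that it is a bijection.

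For well-definedness I would show that $\tilde\pi$ splits, positionwise, into the subword $\tilde\alpha$ on the non-fixed positions of $\pi$ and the subword $\gamma$ on the fixed positions. The key point is that forming the derangement part $dp(\pi)=\alpha$ disturbs neither the classification of a non-fixed letter as subcedant or excedant, nor its rank in absolute value among letters of its own type. Indeed, the fixed points of $\pi$ are exactly the letters deleted in forming $dp(\pi)$, and because the fixed positions of $\pi$ coincide with its fixed values, the non-fixed positions and the non-fixed values are the same $k$-subset $S\subseteq[n]$; the reduction then replaces each retained position and each retained value by its rank in $S$, a strictly increasing operation that preserves both the subcedant/fixed/excedant trichotomy and the internal ranking used in the standardization. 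Therefore $s(\pi)=s(\alpha)$, $e(\pi)=e(\alpha)$, and the standardization recipe—smallest subcedants sent to the bottom block $\{1,\dots,s(\alpha)\}$, largest excedants sent to the top block $\{n-e(\alpha)+1,\dots,n\}$, colors unchanged—produces on the non-fixed positions exactly $\tilde\alpha\in D_A^{(c)}$. The $n-k$ fixed points, which increase simultaneously in position and in value, are relabelled in increasing order to the consecutive middle block, so their subword in $\tilde\pi$ is exactly $\gamma$. Hence $\tilde\pi\in sh(\tilde\alpha,\gamma)$.

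For bijectivity I would exhibit the inverse directly. Given $w\in sh(\tilde\alpha,\gamma)$, let $F\subseteq[n]$ be the set of positions carrying the $\gamma$-subword, equivalently the positions whose letters lie in the middle block. By the reconstruction principle recorded just before the lemma—a colored derangement in $D_k^{(c)}$ together with a prescribed set of $n-k$ fixed points determines a unique element of $G_{c,n}$—there is a unique $\pi\in G_{c,n}$ with $dp(\pi)=\alpha$ and fixed-point set $F$. Setting $\phi^{-1}(w)=\pi$, I would verify that $\tilde\pi=w$ and $\phi^{-1}(\tilde\pi)=\pi$; both follow from the block decomposition established above, since the value-block of each letter of $\tilde\pi$ (bottom, middle, or top) records whether the corresponding letter of $\pi$ is a subcedant, a fixed point, or an excedant, so the two constructions are mutually inverse.

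The step I expect to be most delicate is the block decomposition of the first paragraph, namely checking that every comparison in the standardization is carried out correctly under (\ref{totalord}) rather than the reversed order employed in \cite{assaf2010}. In the colored setting a letter $j^{[t]}$ with $t\neq0$ sitting in position $j$ is a subcedant, because every colored letter lies below all uncolored ones in (\ref{totalord}); thus the subcedant/excedant split interacts with the colors in a way absent from the uncolored case, and confirming that this split—and hence the induced block decomposition—survives both reduction and standardization is the order-sensitive heart of the argument. I expect this to be a careful but routine case analysis of consecutive letters compared according to (\ref{totalord}), which is exactly the part that cannot be quoted verbatim from \cite{assaf2010}.
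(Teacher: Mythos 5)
Your argument is sound, but note that the paper itself contains no proof of Lemma \ref{secondlem}: the lemma is quoted as a result of Assaf, and the authors merely remark --- without justification --- that the statement survives the replacement of Assaf's total order by the order (\ref{totalord}). What you have written is therefore not an alternative to the paper's argument but a substitute for an argument the paper omits. Your route is the standard Wachs--Assaf relabelling scheme, and the two points you isolate are exactly the right ones: (i) since a fixed point of a colored permutation must carry color $0$, the set of fixed positions equals the set of fixed values, so the reduction $dp$ re-ranks positions and values by the same increasing bijection and hence preserves the subcedant/fixed/excedant trichotomy (including the order-sensitive case of colored letters, which are always subcedants under (\ref{totalord})) as well as the internal rankings used in the standardization; this yields the block decomposition $\tilde{\pi}\in sh(\tilde{\alpha},\gamma)$; and (ii) a shuffle of $\tilde{\alpha}$ and $\gamma$ is uniquely determined by the set of positions carrying the middle-block letters, which matches the reconstruction of $\pi$ from $\alpha$ together with a choice of fixed-point set, giving bijectivity. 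Two small remarks: the word $\gamma$ should have $n-k$ letters, i.e. $\gamma=(s(\alpha)+1)\cdots(s(\alpha)+n-k)$ (the ``$s(\alpha)+k$'' in the statement is a slip that your proof silently corrects), and your reduction of the descent and color identities to Lemma \ref{firstlem} is legitimate because that lemma is stated for all $\pi\in G_{c,k}$ with $k\le n$, in particular for $k=n$.
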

Thus, the bijection $\phi$ given in Lemma \ref{secondlem} preserves the flag major index, i.e., $fmaj(\phi(\pi))=fmaj(\pi)$ for all $\pi \in \{\pi \in G_{c,n}: dp(\pi)=\alpha\}$. In the following corollary, we will extend Wachs' formula 
$$\mathop{\sum_{dp(\pi)=\sigma}}_{\pi \in \mathfrak{S}_{n}}q^{maj(\pi)}=q^{maj(\sigma)} {n \brack k}_{q}$$
given in \cite{wachs1989} to colored case by employing Lemma \ref{firstlem}, Lemma \ref{secondlem} and equation (\ref{cgarsiagessel}).

\begin{cor}\label{cwac}
Let $0 \leq k \leq n$ and $\sigma \in D_{k}^{(c)}$. Then we have 
\begin{equation*}
\mathop{\sum_{dp(\pi)=\sigma}}_{\pi \in G_{c,n}}q^{fmaj(\pi)}=q^{fmaj(\sigma)} {n \brack k}_{q^c}
\end{equation*}
where ${n \brack k}_{q^c}=\frac{[n]_{q^c}!}{[k]_{q^c}![n-k]_{q^c}!}$ is the $q$-binomial coefficient.
\end{cor}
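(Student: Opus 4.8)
The plan is to prove Corollary \ref{cwac} by summing the flag-major index over all colored permutations $\pi \in G_{c,n}$ whose derangement part equals a fixed $\sigma \in D_k^{(c)}$, and to reduce this to the shuffle identity (\ref{cgarsiagessel}) via the bijection $\phi$ of Lemma \ref{secondlem}. First I would note that the set $\{\pi \in G_{c,n} : dp(\pi) = \sigma\}$ is exactly the domain of $\phi$ when we take $\alpha = \sigma$ (with $k \le n$), so that $\phi$ carries this set bijectively onto the shuffle set $sh(\tilde{\sigma}, \gamma)$, where $\gamma = (s(\sigma)+1)\cdots(s(\sigma)+k)$ is an increasing run of $k$ consecutive uncolored letters. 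Since Lemma \ref{secondlem} guarantees $Des_c(\phi(\pi)) = Des_c(\pi)$ and $col(\phi(\pi)) = col(\pi)$, the defining equation (\ref{fmaj2001}) for the flag-major index, namely $fmaj = c \cdot maj + col$, shows that $fmaj(\phi(\pi)) = fmaj(\pi)$ for every $\pi$ in the domain; this is already recorded in the text immediately following Lemma \ref{secondlem}.

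Having established that $\phi$ is flag-major preserving, I would rewrite the left-hand sum as a sum over the shuffle set:
\begin{equation*}
	\mathop{\sum_{dp(\pi)=\sigma}}_{\pi \in G_{c,n}} q^{fmaj(\pi)} = \sum_{w \in sh(\tilde{\sigma},\gamma)} q^{fmaj(w)}.
\end{equation*}
Now I would apply the colored Garsia--Gessel identity (\ref{cgarsiagessel}) with $\tau = \tilde{\sigma}$ (a colored word on $k$ letters) and $\sigma$-role played by $\gamma$ (an uncolored word on $n-k$ letters, so $j+k = n$ in the notation of that identity), yielding
\begin{equation*}
	\sum_{w \in sh(\tilde{\sigma},\gamma)} q^{fmaj(w)} = q^{fmaj(\tilde{\sigma}) + fmaj(\gamma)} {n \brack k}_{q^c}.
\end{equation*}
The final arithmetic step is to identify the exponent $fmaj(\tilde{\sigma}) + fmaj(\gamma)$ with $fmaj(\sigma)$: since $\gamma$ is increasing and uncolored it has no descents and no color, so $fmaj(\gamma) = 0$, and by Lemma \ref{firstlem} we have $Des_c(\tilde{\sigma}) = Des_c(\sigma)$ together with $col(\tilde{\sigma}) = col(\sigma)$, whence $fmaj(\tilde{\sigma}) = fmaj(\sigma)$ again by (\ref{fmaj2001}). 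Combining these gives the claimed identity.

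The step I expect to require the most care is the bookkeeping that matches the parameters of the shuffle identity (\ref{cgarsiagessel}) to the present setting, specifically confirming that $\gamma$ and $\tilde{\sigma}$ use disjoint letters from the alphabet in (\ref{totalord}) so that (\ref{cgarsiagessel}) legitimately applies, and that the total number of letters in the shuffle is exactly $n$ (so the $q$-binomial is ${n \brack k}_{q^c}$ and not something else). The range $\tilde{\sigma} \in D_A^{(c)}$ with $A = \{1,\dots,s(\sigma)\} \cup \{n-e(\sigma)+1,\dots,n\}$ described before Lemma \ref{firstlem} shows $\tilde{\sigma}$ lives on letters that avoid the block $\{s(\sigma)+1,\dots,s(\sigma)+k\}$ occupied by $\gamma$, so disjointness holds and the shuffles indeed have $n$ letters. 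I would also flag the boundary cases $k = 0$ (where $\sigma$ is the empty derangement and both sides equal $1$) and $k = n$ (where the sum is a single term $q^{fmaj(\sigma)}$ and ${n \brack n}_{q^c} = 1$) as sanity checks. None of these is a genuine obstacle; the substance of the argument is entirely carried by Lemmas \ref{firstlem} and \ref{secondlem} together with (\ref{cgarsiagessel}), so the corollary follows by assembling them.
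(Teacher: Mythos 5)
Your proposal is correct and follows exactly the route the paper intends: the corollary is stated there as a direct consequence of Lemma \ref{firstlem}, Lemma \ref{secondlem} and equation (\ref{cgarsiagessel}), and you assemble precisely those ingredients (bijection onto the shuffle set, preservation of $fmaj$, the colored Garsia--Gessel identity, and $fmaj(\tilde{\sigma})=fmaj(\sigma)$). The only point worth noting is that you correctly treat $\gamma$ as a word on $n-k$ letters when invoking (\ref{cgarsiagessel}), even though the paper's statement of Lemma \ref{secondlem} writes $\gamma=(s(\alpha)+1)\cdots(s(\alpha)+k)$; your reading is the right one, since $\pi$ has $n-k$ fixed points.
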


\begin{prop}\label{14}
Let $0 \leq k \leq n$ and $\sigma \in D_{k}^{(c)}$. If $c$ is even, then we have 
\begin{equation*}
\mathop{\sum_{dp(\pi)=\sigma}}_{\pi \in G_{c,n}}(-1)^{L(\pi)}q^{fmaj(\pi)}=(-1)^{L(\sigma)}q^{fmaj(\sigma)} {n \brack k}_{q^c}.
\end{equation*}
\end{prop}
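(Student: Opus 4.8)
The plan is to show that the sign $(-1)^{L(\pi)}$ is constant on the fiber $\{\pi \in G_{c,n} : dp(\pi) = \sigma\}$, with common value $(-1)^{L(\sigma)}$, and then to pull this scalar out of the sum and apply Corollary \ref{cwac}. Since $c$ is even, the congruence $L(\pi) \equiv \textrm{col}(\pi) + inv(|\pi|) \pmod 2$ established in the proof of Theorem \ref{has} holds for every colored permutation, in particular for both $\pi$ and $\sigma$. Hence it suffices to check that $\textrm{col}(\pi) \equiv \textrm{col}(\sigma)$ and $inv(|\pi|) \equiv inv(|\sigma|)$ modulo $2$ whenever $dp(\pi) = \sigma$.

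The color comparison is immediate and in fact holds exactly: the letters deleted in passing from $\pi$ to its derangement part are precisely the fixed points of $\pi$, each of which carries color $0$, so they contribute nothing and $\textrm{col}(\pi) = \textrm{col}(\sigma)$. For the inversion comparison I would pass to signs of the underlying permutations, using $(-1)^{inv(|\pi|)} = \textrm{sgn}(|\pi|)$. Writing $B$ for the set of fixed positions of $\pi$ (so $|B| = n-k$), the permutation $|\pi|$ fixes every element of $B$ and restricts to a permutation $\tau$ of $[n] \setminus B$; as the elements of $B$ are $1$-cycles contributing $+1$ to the sign, we get $\textrm{sgn}(|\pi|) = \textrm{sgn}(\tau)$. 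The reduction underlying $dp$ is an order-preserving relabeling of $[n]\setminus B$ by $[k]$, i.e. a conjugation of $\tau$, so it preserves cycle type and therefore sign, yielding $\textrm{sgn}(\tau) = \textrm{sgn}(|\sigma|)$. Combining the two identities gives $inv(|\pi|) \equiv inv(|\sigma|) \pmod 2$.

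Putting the color and inversion congruences together shows $(-1)^{L(\pi)} = (-1)^{\textrm{col}(\pi)+inv(|\pi|)} = (-1)^{\textrm{col}(\sigma)+inv(|\sigma|)} = (-1)^{L(\sigma)}$ for every $\pi$ in the fiber, so the sign is indeed constant there. Factoring it out and invoking Corollary \ref{cwac} then gives the claimed formula. The only genuinely nontrivial step is the inversion-parity equality, and the cleanest route to it is the sign argument above, which sidesteps any direct count of how inserting fixed points perturbs individual inversions; note in particular that $|\sigma|$ itself need not be a derangement in $\mathfrak{S}_k$, since a letter of $\sigma$ may fix its position while carrying nonzero color, so one really does want an argument that only sees cycle type.
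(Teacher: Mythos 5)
Your proof is correct, and it takes a genuinely different route from the paper's. The paper also reduces the proposition to the single congruence $L(\pi)\equiv L(dp(\pi))\pmod 2$ followed by Corollary \ref{cwac}, but it establishes that congruence by working directly with the colored inversion number $inv_c=L$: it cites a computation from the Ahmia--Ram\'irez--Villamizar paper showing that each fixed point $p_j$ contributes $2(p_j-j)(n-k-p_j+j)$ uncolored inversions (always even) plus a number of colored inversions that is a multiple of $c$ (even since $c$ is even), so deleting all fixed points changes $L$ by an even amount. You instead exploit the mod-$2$ collapse $L(\pi)\equiv \mathrm{col}(\pi)+inv(|\pi|)$ available for even $c$, observe that $\mathrm{col}$ is exactly preserved because fixed points of a colored permutation necessarily carry color $0$, and settle the inversion parity by a sign argument: $|\pi|$ splits as the identity on the fixed positions together with a permutation of the complement that is conjugate, via the order-preserving relabeling, to $|\sigma|$, so $\mathrm{sgn}(|\pi|)=\mathrm{sgn}(|\sigma|)$. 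Your version is more self-contained (no appeal to an external inversion count) and your closing remark is a genuine point of care --- $|\sigma|$ can fix a position whose letter carries nonzero color, so an argument seeing only cycle type is exactly what is needed; the paper's version, in exchange, yields the slightly stronger quantitative statement that $L(\pi)-L(dp(\pi))$ itself is even rather than only the parity identity after the mod-$2$ reduction.
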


\begin{proof}
Viewed through the lens of Corollary \ref{cwac}, it suffices to prove that
\begin{equation*}
L(\pi) \equiv L(dp(\pi)) ~(\textrm{mod}~2)
\end{equation*}
for each colored permutation $\pi \in G_{c,n}$. Suppose that the fixed points of $\pi=\pi_1^{[t_1]} ~\pi_2^{[t_2]} \cdots  \pi_n^{[t_n]}$ are in increasing order $p_1 < p_2 <\dots < p_k$. Since $L(\pi)=inv_c(\pi)$, the contribution of inversions at exactly each fixed point $p_j$ to the total number of inversions $inv_c(\pi)$ can be obtained from the proof of Theorem 4.1 in \cite{ahmia2025} as $2(p_j-j)(n-k-p_j+j)$ and so the contribution of this case to the inversion number $inv_c(\pi)$ is even. On the other hand, any $\pi_r^{[t_r]}$ with $t_r \neq 0$ appearing after the position of $p_j$ and greater than $p_j$ contributes the number $c$ to $inv_c(\pi)$.
Thus, removing all fixed points from $\pi$, $inv_c(\pi)$ decreases by an even number since $c$ is even. In conclusion, we deduce that $L(\pi) \equiv L(dp(\pi)) ~(\textrm{mod}~2)$.
\end{proof}

\begin{thm}\label{t5}
For any $n \geq 1$, we have
\begin{equation}\label{derpoly}
d_{n}^{(c)}(q)=\sum_{\pi \in D_{n}^{(c)}}q^{fmaj(\pi)}=[c]_q[2c]_q\dots [nc]_q \sum_{k=0}^n \frac{(-1)^k q^{c \binom{k}{2}}}{[c]_q[2c]_q\dots [kc]_q}.
\end{equation}
\end{thm}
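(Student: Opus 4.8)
The plan is to reduce the statement to a single $q$-binomial inversion. First I would partition $G_{c,n}$ according to the derangement part: each $\pi \in G_{c,n}$ has a unique $dp(\pi) \in D_k^{(c)}$, where $k$ is the number of non-fixed points of $\pi$, and distinct choices of $\sigma \in D_k^{(c)}$ (over all $k$) index disjoint blocks whose union is all of $G_{c,n}$. Summing $q^{fmaj(\pi)}$ over $\pi$ in this partition, evaluating each inner block by Corollary \ref{cwac} (which gives $q^{fmaj(\sigma)}{n\brack k}_{q^c}$), and recognizing the total as the full Mahonian sum from (\ref{eflag}), I obtain the recurrence
$$\prod_{i=1}^n [ci]_q \;=\; \sum_{k=0}^n {n\brack k}_{q^c}\, d_k^{(c)}(q).$$

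Since the coefficient of $d_n^{(c)}(q)$ on the right is ${n\brack n}_{q^c}=1$, this triangular system determines $d_n^{(c)}(q)$ uniquely, and the second step is to solve it by the classical Gaussian ($q$-binomial) inversion: from $a_n=\sum_k {n\brack k}_x b_k$ one recovers $b_n=\sum_k (-1)^{n-k} x^{\binom{n-k}{2}}{n\brack k}_x a_k$. Applying this with $x=q^c$, $a_k=\prod_{i=1}^k[ci]_q$ and $b_k=d_k^{(c)}(q)$ yields
$$d_n^{(c)}(q)=\sum_{k=0}^n (-1)^{n-k} q^{c\binom{n-k}{2}} {n\brack k}_{q^c}\prod_{i=1}^k [ci]_q.$$

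The third step is purely algebraic: I would rewrite this in the stated form using the factorization $[ci]_q=[c]_q\,[i]_{q^c}$, so that $\prod_{i=1}^k[ci]_q=[c]_q^{\,k}[k]_{q^c}!$ and ${n\brack k}_{q^c}=[n]_{q^c}!/([k]_{q^c}!\,[n-k]_{q^c}!)$. The $[k]_{q^c}!$ factors then cancel, and after reindexing by $j=n-k$ and factoring out $\prod_{i=1}^n[ci]_q=[c]_q[2c]_q\cdots[nc]_q$, the remaining sum is exactly $\sum_{j}(-1)^j q^{c\binom{j}{2}}/([c]_q[2c]_q\cdots[jc]_q)$, which is the claimed expression.

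I do not expect a genuine obstacle here: the one nontrivial combinatorial fact---that fixing the derangement part $\sigma$ and summing over all $\pi$ with $dp(\pi)=\sigma$ scales $q^{fmaj(\sigma)}$ by the Gaussian binomial---is already isolated in Corollary \ref{cwac}, so the argument is just the $q$-analogue of the standard derangement inclusion--exclusion. The points demanding care are checking that the partition by derangement part is a genuine disjoint cover (so the left side really equals (\ref{eflag})) and tracking the $q^{c\binom{k}{2}}$ weights and signs through the inversion and reindexing. Should one wish to avoid citing $q$-binomial inversion, I would instead substitute the claimed right-hand side into the recurrence and verify the resulting identity directly via the $q$-binomial theorem, concluding by uniqueness of the triangular solution.
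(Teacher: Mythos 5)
Your proposal is correct and follows essentially the same route as the paper: partition $G_{c,n}$ by derangement part, apply Corollary \ref{cwac} together with equation (\ref{eflag}) to obtain $\prod_{i=1}^n[ci]_q=\sum_{k=0}^n{n\brack k}_{q^c}d_k^{(c)}(q)$, and then invert by Gauss ($q$-binomial) inversion. The paper's proof is exactly this, stated more briefly; your additional algebraic simplification via $[ci]_q=[c]_q[i]_{q^c}$ is a correct elaboration of the final step that the paper leaves implicit.
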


\begin{proof}
Considering Corollary \ref{cwac} and equation (\ref{eflag}), we have
%and the fact that $G_{c,n}=\coprod_{k=0}^n D_k^{(c)}$, 
\begin{align*}
[c]_q[2c]_q\dots [nc]_q&=\sum_{\pi \in G_{c,n}}q^{fmaj(\pi)}\\
&=\sum_{k=0}^n~\sum_{\alpha \in D_k^{(c)}}~\sum_{dp(\pi)=\alpha}q^{fmaj(\pi)}\\
&=\sum_{k=0}^n~\sum_{\alpha \in D_k^{(c)}}q^{fmaj(\alpha)}{n \brack k}_{q^c}\\
&=\sum_{k=0}^n {n \brack k}_{q^c} d_k^{(c)}(q).
\end{align*}
Hence, Gauss inversion (i.e., $q$-binomial inversion) given in Corollary 3.38 of \cite{aigner1979} on the resulting equation leads to the desired formula (\ref{derpoly}).
\end{proof}

For any $n \geq 1$, we denote by $d_{n}^{(c)}$ the number of all colored derangements in $D_n^{(c)}$. Then 
\begin{equation}\label{assafdr}
    d_{n}^{(c)}=n! \sum_{k=0}^{n} \frac{(-1)^k c^{n-k}}{k!}.
\end{equation}

It should be emphasized that Assaf \cite{assaf2010} established the formula for $d_{n}^{(c)}$ in (\ref{assafdr}) by using Principle of Inclusion and Exclusion. Next result of this paper is the following theorem, giving us the formula for the signed Mahonian polynomial over $D_n^{(c)}$.

\begin{thm}\label{t6}
Let $c$ be even and $n \geq 1$. Then, we have
$$\bar{d}_{n}^{(c)}(q)=\sum_{\alpha \in D_{n}^{(c)}}(-1)^{L(\alpha)}q^{fmaj(\alpha)}=[c]_q[2c]_q\dots [nc]_q \sum_{k=0}^n \frac{(-1)^k q^{c \binom{k}{2}}}{[c]_q[2c]_q\dots [kc]_q} \left( \frac{1-q}{1+q}\right)^{\lfloor{\frac{n-k+1}{2}}\rfloor}.$$
\end{thm}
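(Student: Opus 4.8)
The plan is to run the same argument that proves Theorem \ref{t5}, but starting from the signed generating function of Theorem \ref{has} in place of (\ref{eflag}), and using the signed fiber count of Proposition \ref{14} in place of Corollary \ref{cwac}. First I would partition $G_{c,n}$ according to the derangement part: as noted before Lemma \ref{secondlem}, a colored permutation with $n-k$ fixed points determines a unique $\alpha=dp(\pi)\in D_k^{(c)}$, and conversely each $\alpha\in D_k^{(c)}$ together with a choice of $n-k$ fixed points gives a unique such $\pi$. Since $c$ is even, summing Proposition \ref{14} over all $\alpha\in D_k^{(c)}$ (the factor ${n \brack k}_{q^c}$ being independent of $\alpha$) and then over $k$ gives
\begin{align*}
\sum_{\pi \in G_{c,n}} (-1)^{L(\pi)} q^{fmaj(\pi)}
&= \sum_{k=0}^n \sum_{\alpha \in D_k^{(c)}} \sum_{dp(\pi)=\alpha} (-1)^{L(\pi)} q^{fmaj(\pi)}\\
&= \sum_{k=0}^n {n \brack k}_{q^c} \sum_{\alpha \in D_k^{(c)}} (-1)^{L(\alpha)} q^{fmaj(\alpha)}
= \sum_{k=0}^n {n \brack k}_{q^c}\, \bar{d}_k^{(c)}(q),
\end{align*}
where the last equality is the definition (\ref{scfmaj}) of $\bar{d}_k^{(c)}(q)$.

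Next I would identify the left-hand side in closed form. Because $c$ is even, (\ref{bce2012}) gives
$$\left(\frac{1-q}{1+q}\right)^{\lfloor (n+1)/2\rfloor}\prod_{i=1}^n[ci]_q=\sum_{k=0}^n {n \brack k}_{q^c}\,\bar{d}_k^{(c)}(q).$$
This is precisely a relation of the form $f(n)=\sum_{k=0}^n {n \brack k}_{q^c}\,g(k)$, so the Gauss (q-binomial) inversion of Corollary 3.38 in \cite{aigner1979}, applied with base $q^c$, solves for $\bar{d}_n^{(c)}(q)=g(n)$ and yields
$$\bar{d}_n^{(c)}(q)=\sum_{k=0}^n (-1)^{n-k} q^{c\binom{n-k}{2}} {n \brack k}_{q^c}\left(\frac{1-q}{1+q}\right)^{\lfloor (k+1)/2\rfloor}\prod_{i=1}^k[ci]_q.$$

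Finally I would put this into the stated product-sum shape exactly as in the proof of Theorem \ref{t5}. Using $[ci]_q=[c]_q[i]_{q^c}$ one has $\prod_{i=1}^k[ci]_q=[c]_q^{\,k}[k]_{q^c}!$, whence ${n \brack k}_{q^c}\prod_{i=1}^k[ci]_q=[c]_q^{\,k}\,[n]_{q^c}!/[n-k]_{q^c}!$; substituting the index $j=n-k$ factors out $\prod_{i=1}^n[ci]_q=[c]_q^{\,n}[n]_{q^c}!$ and turns the exponent $\lfloor (k+1)/2\rfloor$ into $\lfloor (n-j+1)/2\rfloor$, giving
$$\bar{d}_n^{(c)}(q)=\prod_{i=1}^n[ci]_q\sum_{j=0}^n\frac{(-1)^j q^{c\binom{j}{2}}}{\prod_{i=1}^j[ci]_q}\left(\frac{1-q}{1+q}\right)^{\lfloor (n-j+1)/2\rfloor},$$
which is the claimed formula. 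The computation is almost entirely parallel to Theorem \ref{t5}; the only new inputs are the signed fiber sum of Proposition \ref{14} and the even-$c$ evaluation (\ref{bce2012}). I therefore expect no deep obstacle, only one point demanding care: correctly transporting the floor exponent $\lfloor (k+1)/2\rfloor$ through the inversion and the re-indexing $j=n-k$, since this factor is exactly what distinguishes the present signed statement from the unsigned Theorem \ref{t5}.
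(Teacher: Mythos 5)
Your proposal is correct and follows essentially the same route as the paper's own proof: partition $G_{c,n}$ by derangement part, apply Proposition \ref{14} to obtain $\sum_{k=0}^n {n \brack k}_{q^c}\bar{d}_k^{(c)}(q)$, identify the left-hand side via (\ref{bce2012}), and conclude by Gauss inversion in base $q^c$ followed by the re-indexing using $[ci]_q=[c]_q[i]_{q^c}$. The handling of the floor exponent through the substitution $j=n-k$ is exactly as in the paper.
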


\begin{proof}
Summing over all colored derangements $\alpha \in D_k^{(c)}$ with $0 \leq k \leq n$, we can deduce from Proposition \ref{14} and equation (\ref{scfmaj}) that
\begin{align*}
 \sum_{\pi \in G_{c,n}}(-1)^{L(\pi)}q^{fmaj(\pi)}&=\sum_{k=0}^n~\sum_{\alpha \in D_k^{(c)}}~\sum_{dp(\pi)=\alpha,~\pi \in G_{c,n}}(-1)^{L(\pi)}q^{fmaj(\pi)}\\
&=\sum_{k=0}^n~\sum_{\alpha \in D_k^{(c)}}~(-1)^{L(\alpha)}q^{fmaj(\alpha)} {n \brack k}_{q^c} \\
 &=\sum_{k=0}^n {n \brack k}_{q^c}\bar{d}_{k}^{(c)}.
\end{align*}
Taking the formula in (\ref{bce2012}) into account, we get
\begin{equation}\label{gauss16}
    \sum_{k=0}^n {n \brack k}_{q^c}\bar{d}_{k}^{(c)}=\left( \frac{1-q}{1+q}\right)^{\lfloor{\frac{n+1}{2}}\rfloor} \prod_{i=1}^n [ci]_q.
\end{equation}
It is also well known from the proof of Theorem 2.2 in \cite{ahmia2025} that $[ci]_q=[c]_q[i]_{q^c}$ for any positive integer $i$. Applying Gauss inversion to equation (\ref{gauss16}) yields to 
\begin{align*}
   \bar{d}_{n}^{(c)}(q)&=\sum_{k=0}^n {n \brack k}_{q^c}(-1)^{n-k}q^{c \binom{n-k}{2} }\left( \frac{1-q}{1+q}\right)^{\lfloor{\frac{k+1}{2}}\rfloor} \prod_{i=1}^k [ci]_q\\
   &=[c]_q[2c]_q\dots [nc]_q \sum_{k=0}^n \frac{(-1)^k q^{c \binom{k}{2}}}{[c]_q[2c]_q\dots [kc]_q}\left( \frac{1-q}{1+q}\right)^{\lfloor{\frac{n-k+1}{2}}\rfloor}
\end{align*}
which is equivalent to the desired formula. 
\end{proof}

Let $D_n^{+(c)}$ denote the sets of colored derangements with even length (according to the length function $L$) in $G_{c,n}$. Define $d_n^{+(c)}:=|D_n^{+(c)}|$. We infer from Theorem \ref{t5} and Theorem \ref{t6} that 
\begin{equation}\label{even17}
   d_n^{+(c)}(q)=[c]_q[2c]_q\dots [nc]_q \sum_{k=0}^n \frac{(-1)^k q^{c \binom{k}{2}}}{[c]_q[2c]_q\dots [kc]_q} \left( \frac{1}{2}+\frac{1}{2}\left( \frac{1-q}{1+q}\right)^{\lfloor{\frac{n-k+1}{2}}\rfloor}\right) 
\end{equation}
when $c$ is even. In similar vein, we indicate by $D_n^{-(c)}$ the set of odd colored derangements in $G_{c,n}$. Let $d_n^{-(c)}:=|D_n^{-(c)}|$. Substituting $q \rightarrow 1$ in equation (\ref{even17}), we obtain 
$$d_n^{+(c)}=\frac{n!}{2} \sum_{k=0}^{n-1} \frac{(-1)^k c^{n-k}}{k!} +(-1)^n.$$
Moreover, we conclude that $d_n^{-(c)}=\frac{n!}{2} \sum_{k=0}^{n-1} \frac{(-1)^k c^{n-k}}{k!}$. Thus, the difference between the number of even and odd colored derangements in $G_{c,n}$ is 
\begin{equation}\label{diff}
  d_n^{+(c)}-d_n^{-(c)}=(-1)^n  
\end{equation}
when $c$ is even. As a matter of the fact, the formula (\ref{diff}) is actually a general form of the difference, which can be easily obtained from \cite{ji2025}, for the hyperoctahedral group corresponding to the case $c=2$. Since the number $d_n^{+(2)}$ of even length derangements in the hyperoctahedral group $B_n$ has been already computed by Ji and Zhang in \cite{ji2025}, one can also count the number of odd derangements in the group $B_n$. Due to \cite{ji2025}, it can easily be seen that the difference in question is equal to zero for the even-signed permutation group.

%Based on Theorem \ref{t5} and Theorem \ref{t6}, we have
%\begin{equation}\label{odd18}
%   d_n^{odd(c)}(q9=[c]_q[2c]_q\dots [nc]_q \sum_{k=0}^n \frac{(-1)^k q^{c \binom{k}{2}}}{[c]_q[2c]_q\dots [kc]_q} \left( \frac{1}{2}-\frac{1}{2}\left( \frac{1-q}{1+q}\right)^{\lfloor{\frac{n-k+1}{2}}\rfloor}\right). 
%\end{equation}
\vspace{0.2cm}

\begin{thm} \label{drgmntsgn}
Let $c$ and $n$ be positive integers. Then, we have 
$$\sum_{\pi \in D_n^{(c)}} \operatorname{sgn}(\pi)=(-1)^{n-1}(cn-1),$$
where $\operatorname{sgn}(\pi):=(-1)^{inv(|\pi|)}$.
 \end{thm}

\begin{proof}
Write $\pi = (\sigma; c_1,\dots,c_n) \in G_{c,n}$, where $\sigma$ is the underlying permutation and $c_1,\ldots,c_n \in \{0,\ldots,c-1\}$ . For a subset $S \subseteq [n]$, define  
\[
F_S = \{\pi \in G_{c,n} \mid \sigma(i) = i \text{ and } c_i = 0 \text{ for all } i \in S \}.
\]  
If $|S| = k$, the elements of $F_S$ are obtained by choosing a permutation $\sigma'$ of $T = [n] \setminus S$ (with $|T| = n-k$) and colors $c_t \in \{0, \dots, c-1\}$ for $t \in T$. Hence $|F_S| = (n-k)! \, c^{\,n-k}$.

Now examine the sign-weighted sum  
\[
f(F_S) = \sum_{\pi \in F_S} \operatorname{sgn}(\pi).
\]  
Since $\sigma$ is the identity on $S$, we have $\operatorname{sgn}(\pi) = \operatorname{sgn}(\sigma')$, where $\sigma'$ is the permutation on $T$. Therefore  
\[
f(F_S) = \Bigl( \sum_{\sigma' \in \mathfrak{S}_T} \operatorname{sgn}(\sigma') \Bigr) \cdot c^{\,n-k}.
\]  
where $\mathfrak{S}_T$ denotes the set of all permutations of $T$.
The sum $\sum_{\sigma' \in \mathfrak{S}_m} \operatorname{sgn}(\sigma')$ is $0$ for $m \ge 2$ and equals $1$ for $m = 0, 1$. Consequently, $f(F_S) \neq 0$ only when $n - k \le 1$, i.e., $k \ge n-1$.

The non-zero contributions are:  

\begin{itemize}
    \item When $k = n$ (so $T = \varnothing$), the unique permutation of $T$ is the identity (with sign $1$), giving $f(F_S) = 1$.
    \item When $k = n-1$ (so $|T| = 1$), the unique permutation of $T$ also has sign $1$, and there are $c$ choices for the color of the single element in $T$; thus $f(F_S) = c$.
\end{itemize}

By inclusion--exclusion,  
\[
\sum_{\pi \in D_n^{(c)}} \operatorname{sgn}(\pi)
= \sum_{k=0}^n (-1)^k \sum_{|S| = k} f(F_S).
\]  
Only $k = n$ and $k = n-1$ contribute:  
\[
\sum_{|S| = n} f(F_S) = 1, \qquad 
\sum_{|S| = n-1} f(F_S) = \binom{n}{n-1} \, c = n c.
\]  
Hence,  
\[
\sum_{\pi \in D_n^{(c)}} \operatorname{sgn}(\pi)= (-1)^n \cdot 1 + (-1)^{n-1} \cdot n c
= (-1)^n \bigl( 1 - n c \bigr),
\]  
which completes the argument.
\end{proof}

In the case of symmetric group $\mathfrak{S}_n$, Chapman \cite{chapman2001} provided a bijective proof showing that the difference between the number of even and odd derangements in $\mathfrak{S}_n$ is 
\begin{equation}\label{chapman}
  d_n^{+}-d_n^{-}=(-1)^{n-1}(n-1).  
\end{equation}

Substituting $c=1$ into Theorem \ref{drgmntsgn}, we obtain Chapman's result above. To illustrate Theorem \ref{drgmntsgn}, we will give the following example.

\begin{ex}
When using Table \ref{g33}, we observe that $\sum_{\pi \in D_3^{(3)}} \operatorname{sgn}(\pi)=8.$
\end{ex}

%\begin{lem}\label{lem:parityL}
%Let $\pi=(\sigma;c_1,\dots,c_n)\in G_{c,n}$. Then
%\[
%(-1)^{L(\pi)} 
%=
%\begin{cases}
%(-1)^{\operatorname{inv}(\sigma)+\operatorname{col}(\pi)+\sum_{\substack{c_j \neq 0\\ i<j, ~~ \sigma_i<\sigma_j}} 1}, & \text{if $c$ is odd},\\[4pt]
%(-1)^{\operatorname{inv}(\sigma)+\operatorname{col}(\pi)}, & \text{if $c$ is even}.
%\end{cases}
%\]
%\end{lem}

\begin{thm}\label{thm:signed-strong-derangements}
The difference between the number of even and odd colored derangements (according to the length function $L$) in $G_{c,n}$ is:
\[
d_{n}^{+(c)}-d_{n}^{-(c)}=\sum_{\pi\in D_n^{(c)}} (-1)^{L(\pi)}
=
\begin{cases}
(-1)^{n}, & \text{if $c$ is even},\\[4pt]
(-1)^{n-1}(n-1), & \text{if $c$ is odd}.
\end{cases}
\]
\end{thm}

\begin{proof}
Let $\pi = (\sigma;c_1,\dots,c_n)$ be the colored permutation
\[\pi=\sigma_1^{[c_1]}\cdots \sigma_n^{[c_n]},\]
where $\sigma\in \mathfrak{S}_n$ and $c_j\in\{0,1,\dots,c-1\}$.  

\medskip
\begin{enumerate}
\item \textbf{The case $c$ even.}
Denote
\[
\operatorname{Fix}(\sigma)=\{i\in[n]\mid \sigma(i)=i\},
\qquad
k=|\operatorname{Fix}(\sigma)|.
\]

If $i\in\operatorname{Fix}(\sigma)$, the condition $\pi(i)\neq i^{[0]}$ forces $c_i\in\{1,\dots,c-1\}$;  
if $i\notin\operatorname{Fix}(\sigma)$, then $c_i\in\{0,\dots,c-1\}$.  
Consequently,
\begin{equation}\label{eq:color-sum}
\sum_{\substack{(c_1,\dots,c_n)\\ (\sigma;\mathbf{c})\in D_n^{(c)}}}
(-1)^{\sum_{i=1}^n c_i}
=
\Bigl(\sum_{a=0}^{c-1}(-1)^a\Bigr)^{\,n-k}
\Bigl(\sum_{a=1}^{c-1}(-1)^a\Bigr)^{\,k}. 
\end{equation}

From the definition of $L$, we have
\[
(-1)^{L(\pi)}=(-1)^{\operatorname{inv}(\sigma)}(-1)^{\sum_{i=1}^n c_i}.
\]

Hence
\[
\sum_{\pi\in D_n^{(c)}}(-1)^{L(\pi)}
=
\sum_{\sigma\in\mathfrak{S}_n}(-1)^{\operatorname{inv}(\sigma)}
\sum_{\substack{(c_1,\dots,c_n)\\ (\sigma;\mathbf{c})\in D_n^{(c)}}}
(-1)^{\sum_{i=1}^n c_i}.
\]

For even $c$,
\[
\sum_{a=0}^{c-1}(-1)^a=0,\qquad
\sum_{a=1}^{c-1}(-1)^a=-1.
\]

Thus the right-hand side of \eqref{eq:color-sum} vanishes unless $k=n$; i.e. $\sigma=\operatorname{id}$.  
For the identity permutation the inner sum equals $(-1)^n$, whence
\[
\sum_{\pi\in D_n^{(c)}}(-1)^{L(\pi)}
=(-1)^{\operatorname{inv}(\operatorname{id})}(-1)^n=(-1)^n.
\]

\item \textbf{The case $c$ odd.}

For $j\in[n]$ define
\[
N(\sigma,j):=\bigl|\{i<j\mid \sigma(i)<\sigma(j)\}\bigr|,
\]
the number of non-inversions of $\sigma$ to the left of position $j$.  
Then, from the definition of $L$, we obtain
\[
(-1)^{L(\pi)}=(-1)^{\operatorname{inv}(\sigma)
+\sum_{j:\,c_j\neq 0}N(\sigma,j)
+\sum_{j=1}^{n}c_j},
\]
and consequently
\begin{equation}\label{eq:L-factor}
(-1)^{L(\pi)}
=
(-1)^{\operatorname{inv}(\sigma)}
\prod_{j=1}^{n}(-1)^{c_j}
\prod_{j:\,c_j\neq 0}(-1)^{N(\sigma,j)}.
\end{equation}

Fix $\sigma\in \mathfrak{S}_n$ and sum \eqref{eq:L-factor} over all colorings
$(c_1,\dots,c_n)\in\{0,1,\dots,c-1\}^n$ satisfying the derangement condition
$c_j=0\Rightarrow\sigma(j)\neq j$.  
Define
\[
S(\sigma):=\sum_{\substack{c_1,\dots,c_n\\ c_j=0\Rightarrow\sigma(j)\neq j}}
(-1)^{L(\pi)}.
\]

Factor out $(-1)^{\operatorname{inv}(\sigma)}$ and, for each $j$, set $p_j=(-1)^{N(\sigma,j)}$.

\begin{itemize}
\item \textit{If $\sigma(j)=j$ (a fixed point)}, then $c_j\neq0$ and $c_j\in\{1,\dots,c-1\}$.  
The contribution from such a position is
\[
\sum_{c_j=1}^{c-1}(-1)^{c_j} p_j
=p_j\sum_{k=1}^{c-1}(-1)^{k}.
\]
Because $c$ is odd, $c-1$ is even, and $\sum_{k=1}^{c-1}(-1)^k=0$.  
Hence if $\sigma$ has \textbf{any} fixed point, $S(\sigma)=0$.

\item \textit{If $\sigma(j)\neq j$ (non-fixed point)}, then $c_j\in\{0,\dots,c-1\}$.  
The contribution is
\[
\sum_{c_j=0}^{c-1}
\Bigl(\mathbf{1}_{c_j=0}
+\mathbf{1}_{c_j\neq0}\,(-1)^{c_j}p_j\Bigr)
=1+p_j\sum_{k=1}^{c-1}(-1)^k=1.
\]
\end{itemize}

Thus only classical \textbf{derangements} $\sigma$ contribute, and for them
\[
S(\sigma)=(-1)^{\operatorname{inv}(\sigma)}.
\]

\medskip

Summing over all permutations yields
\[
\sum_{\pi\in D_n^{(c)}}(-1)^{L(\pi)}
=\sum_{\sigma\in\mathfrak{D}_n}(-1)^{\operatorname{inv}(\sigma)}=(-1)^{\,n-1}(n-1),
\]
where $\mathfrak{D}_n$ denotes the set of ordinary derangements in $\mathfrak{S}_n$.
\end{enumerate}

Combining the two cases completes the proof.
\end{proof}

In conclusion, we can state that Theorem \ref{thm:signed-strong-derangements} generalizes Chapman's result in equation (\ref{chapman}) to the colored permutation groups.\\

\textbf{Open Question:} 
\begin{itemize}
    \item [$\bullet$] \textit{It is a natural question to ask here what the structure of the signed Mahonian polynomial $\bar{d}_{n}^{(c)}(q)$ is exactly when $c$ is odd?}
\end{itemize}

\section{Appendix A}
 In this section, we will present Table \ref{g33} and Table \ref{g43}, which show the distributions of $fmaj$ and $\widetilde{inv}$ on the groups $G_{3,3}$ and $G_{4,3}$, respectively. These tables help one to satisfy that $fmaj$ and $\widetilde{inv}$ have a symmetric joint distribution on both $G_{3,3}$ and $G_{4,3}$.

\begin{table}[t]

	\caption{Distribution of $L$, $fmaj$, $\widetilde{inv}$ and $fmaj_{\mathcal{F}}$ on $G_{3,3}$}\label{g33}

\label{tab:table1}

\scalebox{0.67}{

\begin{tabular}{|m{0.6cm}|m{0.3cm}m{0.3cm}m{0.3cm}|m{0.5cm}|m{0.8cm}|m{0.6cm}|m{0.9cm}||m{0.6cm}|m{0.3cm}m{0.3cm}m{0.3cm}|m{0.5cm}|m{0.8cm}|m{0.6cm}|m{0.9cm}||m{0.6cm}|m{0.3cm}m{0.3cm}m{0.3cm}|m{0.5cm}|m{0.8cm}|m{0.6cm}|m{0.9cm}|}

\hline

\centering

Rank&\textbf{$\pi_1~\pi_2~\pi_3$}&&&L&$fmaj$&$\widetilde{inv}$&$fmaj_{\mathcal{F}}$&Rank&\textbf{$\pi_1~~\pi_2~~~\pi_3$}&&&L&$fmaj$&$\widetilde{inv}$&$fmaj_{\mathcal{F}}$&Rank&\textbf{$\pi_1~~\pi_2~~~ \pi_3$}&&&L&$fmaj$&$\widetilde{inv}$&$fmaj_{\mathcal{F}}$\\

\hline\hline

$\mathbf{1}$&1 &2 & 3 &0&0 &0&0 &$\mathbf{55}$&1 &2 & $3^{[1]}$ &7&7&1&1 &$\mathbf{109}$& 1 &2 & $3^{[2]}$ &8&8 &2 &2\\

\hline

$\mathbf{2}$&$1^{[1]}$& 2& 3 &1 &1 & 1&1&$\mathbf{56}$& $1^{[1]}$ &2 & $3^{[1]}$&8&8 &2 &2 &$\mathbf{110}$& $1^{[1]}$ &2 & $3^{[2]}$ &9& 9& 3&3 \\

\hline

$\mathbf{3}$&$1^{[2]}$& 2& 3 &2 &2  &2&2&$\mathbf{57}$& $1^{[2]}$& 2& $3^{[1]}$  &9&9 &3 &3&$\mathbf{111}$& $1^{[2]}$ &2 & $3^{[2]}$&10 &10 &4&4\\

\hline

$\mathbf{4}$&1& $2^{[1]}$& 3 &4&4 &1&1 &$\mathbf{58}$& 1& $2^{[1]}$& $3^{[1]}$ &11&5 &2&2 &$\mathbf{112}$&  1& $2^{[1]}$& $3^{[2]}$ &12&12 &3&3\\

\hline

$\mathbf{5}$&$1^{[1]}$& $2^{[1]}$& 3  &5& 2 &2&2&$\mathbf{59}$& $1^{[1]}$& $2^{[1]}$& $3^{[1]}$&12 &3 & 3&3&$\mathbf{113}$&   $1^{[1]}$& $2^{[1]}$& $3^{[2]}$ &13&10 &4&4\\

\hline

$\mathbf{6}$&$1^{[2]}$& $2^{[1]}$& 3  &6&  3&3&3&$\mathbf{60}$&  $1^{[2]}$& $2^{[1]}$& $3^{[1]}$ &13& 4& 4&4&$\mathbf{114}$&   $1^{[2]}$& $2^{[1]}$& $3^{[2]}$ &14&11 &5&5\\

\hline

$\mathbf{7}$&1& $2^{[2]}$& 3 &5&5 &2&2 &$\mathbf{61}$& 1& $2^{[2]}$& $3^{[1]}$ &12& 6&3 &3&$\mathbf{115}$& 1& $2^{[2]}$& $3^{[2]}$&13& 7&4&4 \\

\hline

$\mathbf{8}$&$1^{[1]}$& $2^{[2]}$& 3 &6 & 6 &3&3&$\mathbf{62}$& $1^{[1]}$& $2^{[2]}$& $3^{[1]}$ &13& 7 &4&4 &$\mathbf{116}$& $1^{[1]}$& $2^{[2]}$& $3^{[2]}$&14&8 &5&5 \\

\hline

$\mathbf{9}$&$1^{[2]}$& $2^{[2]}$& 3 &7 &4 &4&4 &$\mathbf{63}$& $1^{[2]}$& $2^{[2]}$& $3^{[1]}$&14& 5 &5 &5&$\mathbf{117}$& $1^{[2]}$& $2^{[2]}$& $3^{[2]}$&15&6 &6&6 \\

\hline

$\mathbf{10}$&2& 1& 3 &1&3 & 3&3&$\mathbf{64}$& 2& 1& $3^{[1]}$ &8&10&4&4&$\mathbf{118}$&  2& 1& $3^{[2]}$&9&11 &5&5\\
\hline

$\mathbf{11}$&$2^{[1]}$& 1& 3 &2& 1&4&4&$\mathbf{65}$&$2^{[1]}$& 1& $3^{[1]}$&9&8 &5&5&$\mathbf{119}$&  $2^{[1]}$& 1& $3^{[2]}$&10&9 & 6&6\\
\hline

$\mathbf{12}$&$2^{[2]}$& 1& 3 &3& 2& 5&5&$\mathbf{66}$& $2^{[2]}$& 1& $3^{[1]}$ &10&9&6&6&$\mathbf{120}$&  $2^{[2]}$& 1& $3^{[2]}$&11&10 &7&7 \\

\hline

$\mathbf{13}$&2& $1^{[1]}$& 3 &2&4 &4&4 &$\mathbf{67}$&2& $1^{[1]}$& $3^{[1]}$ &9&5 &5&5&$\mathbf{121}$&  2& $1^{[1]}$& $3^{[2]}$&10&12 &6&6 \\

\hline

$\mathbf{14}$&$2^{[1]}$& $1^{[1]}$& 3 &3&5 & 5&5&$\mathbf{68}$& $2^{[1]}$& $1^{[1]}$& $3^{[1]}$ &10&6 &6&6&$\mathbf{122}$&  $2^{[1]}$& $1^{[1]}$& $3^{[2]}$&11& 13&7&7\\

\hline

$\mathbf{15}$&$2^{[2]}$& $1^{[1]}$& 3 &4&3 & 6&6&$\mathbf{69}$&$2^{[2]}$& $1^{[1]}$& $3^{[1]}$&11&4 &7&7&$\mathbf{123}$&  $2^{[2]}$& $1^{[1]}$& $3^{[2]}$&12&11 &8&8 \\

\hline

$\mathbf{16}$&2& $1^{[2]}$& 3 &3&5 &5&5&$\mathbf{70}$& 2& $1^{[2]}$& $3^{[1]}$&10&6  &6&6&$\mathbf{124}$&  2& $1^{[2]}$& $3^{[2]}$&11& 7&7&7 \\

\hline

$\mathbf{17}$&$2^{[1]}$& $1^{[2]}$& 3 &4&6&6&6 &$\mathbf{71}$& $2^{[1]}$& $1^{[2]}$& $3^{[1]}$ &11&7&7&7&$\mathbf{125}$&  $2^{[1]}$& $1^{[2]}$& $3^{[2]}$&12&8 & 8&8\\

\hline

$\mathbf{18}$&$2^{[2]}$& $1^{[2]}$& 3 &5& 7& 7&7&$\mathbf{72}$& $2^{[2]}$& $1^{[2]}$& $3^{[1]}$&12&8 &8&8&$\mathbf{126}$&  $2^{[2]}$& $1^{[2]}$& $3^{[2]}$&13& 9&9&9 \\
\hline

\hline
$\mathbf{19}$&1& 3& 2  &1& 6& 3&6&$\mathbf{73}$&1& 3& $2^{[1]}$ &5 &7  &4&7&$\mathbf{127}$& 1& 3& $2^{[2]}$&6& 8& 5&8\\

\hline

$\mathbf{20}$&$1^{[1]}$ & 3& 2  &2&7 & 4&7&$\mathbf{74}$&$1^{[1]}$ & 3& $2^{[1]}$  &6&8  &5&8&$\mathbf{128}$& $1^{[1]}$ & 3& $2^{[2]}$&7&9&6&9 \\

\hline

$\mathbf{21}$&$1^{[2]}$ & 3& 2  &3&8 &5 &8&$\mathbf{75}$&$1^{[2]}$ & 3& $2^{[1]}$  &7&9  &6&9&$\mathbf{129}$& $1^{[2]}$ & 3& $2^{[2]}$&8&10 &7&10\\

\hline

$\mathbf{22}$&1& $3^{[1]}$& 2 &5&4 & 4&7&$\mathbf{76}$&1& $3^{[1]}$& $2^{[1]}$  &9&11  &5&8&$\mathbf{130}$& 1& $3^{[1]}$& $2^{[2]}$&10&12 &6&9 \\
\hline

$\mathbf{23}$& $1^{[1]}$& $3^{[1]}$& 2 &6 & 2&5&8 &$\mathbf{77}$&$1^{[1]}$& $3^{[1]}$& $2^{[1]}$  &10& 9 &6&9&$\mathbf{131}$& $1^{[1]}$& $3^{[1]}$& $2^{[2]}$&11&10 &7&10 \\
\hline

$\mathbf{24}$& $1^{[2]}$& $3^{[1]}$& 2  &7& 3&6&9 &$\mathbf{78}$&$1^{[2]}$& $3^{[1]}$& $2^{[1]}$  &11& 10 &7&10&$\mathbf{132}$& $1^{[2]}$& $3^{[1]}$& $2^{[2]}$&12& 11&8&11 \\

\hline
$\mathbf{25}$&1& $3^{[2]}$& 2 & 6& 5&5&8&$\mathbf{79}$&1& $3^{[2]}$& $2^{[1]}$  &10&6  &6&9&$\mathbf{133}$& 1& $3^{[2]}$& $2^{[2]}$&11&13 &7&10\\
\hline

$\mathbf{26}$& $1^{[1]}$& $3^{[2]}$& 2  &7&6 &6 &9&$\mathbf{80}$&$1^{[1]}$& $3^{[2]}$& $2^{[1]}$  &11& 7 &7&10&$\mathbf{134}$& $1^{[1]}$& $3^{[2]}$& $2^{[2]}$&12&14 &8&11\\
\hline

$\mathbf{27}$& $1^{[2]}$& $3^{[2]}$& 2  &8& 4&7&10 &$\mathbf{81}$&$1^{[2]}$& $3^{[2]}$& $2^{[1]}$  &12& 5 &8&11&$\mathbf{135}$& $1^{[2]}$& $3^{[2]}$& $2^{[2]}$&13& 12&9&12\\

\hline

$\mathbf{28}$&3& 1& 2  &2&3 &6&3 &$\mathbf{82}$&3& 1& $2^{[1]}$  &6&10  &7&4&$\mathbf{136}$& 3& 1& $2^{[2]}$&7&11 &8&5 \\

\hline

$\mathbf{29}$&$3^{[1]}$ & 1& 2  &3&1 & 7&4&$\mathbf{83}$&$3^{[1]}$ & 1& $2^{[1]}$  &7& 8 &8&5&$\mathbf{137}$& $3^{[1]}$ & 1& $2^{[2]}$&8&9 &9&6 \\

\hline

$\mathbf{30}$&$3^{[2]}$ & 1& 2  &4& 2&8&5 &$\mathbf{84}$&$3^{[2]}$ & 1& $2^{[1]}$  &8&9  &9&6&$\mathbf{138}$& $3^{[2]}$ & 1& $2^{[2]}$&9&10 &10&7 \\
\hline

$\mathbf{31}$&3& $1^{[1]}$& 2 &3& 4& 7&4&$\mathbf{85}$&3& $1^{[1]}$& $2^{[1]}$  &7& 5 &8&5&$\mathbf{139}$& 3&$1^{[1]}$& $2^{[2]}$&8&12 & 9&6\\
\hline

$\mathbf{32}$&$3^{[1]}$& $1^{[1]}$& 2 & 4&5& 8&5&$\mathbf{86}$&$3^{[1]}$& $1^{[1]}$& $2^{[1]}$  &8& 6 &9&6&$\mathbf{140}$& $3^{[1]}$&$1^{[1]}$& $2^{[2]}$&9&13 &10&7 \\
\hline

$\mathbf{33}$&$3^{[2]}$& $1^{[1]}$& 2 &5 &3& 9&6&$\mathbf{87}$&$3^{[2]}$& $1^{[1]}$& $2^{[1]}$&9  & 4 &10&7&$\mathbf{141}$& $3^{[2]}$&$1^{[1]}$& $2^{[2]}$&10&11 &11&8 \\

\hline
$\mathbf{34}$&3& $1^{[2]}$& 2 &4&5 &8&5 &$\mathbf{88}$&3& $1^{[2]}$& $2^{[1]}$  &8&6  &9&6&$\mathbf{142}$& 3&$1^{[2]}$& $2^{[2]}$&9&7 &10&7\\
\hline

$\mathbf{35}$& $3^{[1]}$& $1^{[2]}$& 2 &5&6 &9&6 &$\mathbf{89}$&$3^{[1]}$& $1^{[2]}$& $2^{[1]}$  &9& 7 &10&7&$\mathbf{143}$& $3^{[1]}$&$1^{[2]}$& $2^{[2]}$&10& 8& 11&8\\
\hline

$\mathbf{36}$& $3^{[2]}$& $1^{[2]}$& 2 &6&7 & 10&7&$\mathbf{90}$&$3^{[2]}$& $1^{[2]}$& $2^{[1]}$  &10&8  &11&8&$\mathbf{144}$& $3^{[2]}$&$1^{[2]}$& $2^{[2]}$&11&9 &12&9 \\

\hline
$\mathbf{37}$&2&3 & 1&2 &6 &6&6 &$\mathbf{91}$&2&3 & $1^{[1]}$&3&7&7 &7&$\mathbf{145}$&2&3 & $1^{[2]}$ &4&8&8&8 \\

\hline

$\mathbf{38}$&$2^{[1]}$&3 & 1 &3&7 &7&7 &$\mathbf{92}$&$2^{[1]}$&3 & $1^{[1]}$&4&8&8&8 &$\mathbf{146}$&$2^{[1]}$&3 & $1^{[2]}$ &5&9 &9&9\\

\hline

$\mathbf{39}$&$2^{[2]}$&3 & 1 &4&8 &8&8 &$\mathbf{93}$&$2^{[2]}$&3 & $1^{[1]}$&5&9&9&9 &$\mathbf{147}$&$2^{[2]}$&3 & $1^{[2]}$ &6& 10&10&10\\

\hline

$\mathbf{40}$&2&$3^{[1]}$ & 1 &6&4 &7&7 &$\mathbf{94}$&2&$3^{[1]}$ & $1^{[1]}$&7&11&8&8 &$\mathbf{148}$&2&$3^{[1]}$ & $1^{[2]}$ &8& 12&9&9\\

\hline

$\mathbf{41}$&$2^{[1]}$&$3^{[1]}$ & 1 &7&2 & 8&8&$\mathbf{95}$&$2^{[1]}$&$3^{[1]}$ & $1^{[1]}$&8&9& 9&9&$\mathbf{149}$&$2^{[1]}$&$3^{[1]}$ & $1^{[2]}$&9 & 10&10&10\\

\hline

$\mathbf{42}$&$2^{[2]}$&$3^{[1]}$ & 1 &8&3 & 9&9&$\mathbf{96}$&$2^{[2]}$&$3^{[1]}$ & $1^{[1]}$&9&10&10 &10&$\mathbf{150}$&$2^{[2]}$&$3^{[1]}$ & $1^{[2]}$ &10& 11&11&11\\

\hline

$\mathbf{43}$&2&$3^{[2]}$ & 1 &7 &5&8&8 &$\mathbf{97}$&2&$3^{[2]}$ & $1^{[1]}$&8&6&9&9 &$\mathbf{151}$&2&$3^{[2]}$ & $1^{[2]}$ &9& 13&10&10\\

\hline

$\mathbf{44}$&$2^{[1]}$&$3^{[2]}$ & 1 & 8&6&9&9 &$\mathbf{98}$&$2^{[1]}$&$3^{[2]}$ & $1^{[1]}$&9&7& 10&10&$\mathbf{152}$&$2^{[1]}$&$3^{[2]}$ & $1^{[2]}$ &10&14 &11&11\\
\hline

$\mathbf{45}$&$2^{[2]}$&$3^{[2]}$ & 1&9 &4 &10&10 &$\mathbf{99}$&$2^{[2]}$&$3^{[2]}$ & $1^{[1]}$&10&5&11&11 &$\mathbf{153}$&$2^{[2]}$&$3^{[2]}$ & $1^{[2]}$ &11& 12&12&12\\

\hline

$\mathbf{46}$&3&2 & 1 &3&9 &9&9 &$\mathbf{100}$&3&2 & $1^{[1]}$&4&10&10 &10&$\mathbf{154}$&3&2 & $1^{[2]}$ &5&11 &11&11\\

\hline

$\mathbf{47}$&$3^{[1]}$&2 & 1 &4&7 &10 &10&$\mathbf{101}$&$3^{[1]}$&2 & $1^{[1]}$&5&8&11&11 &$\mathbf{155}$&$3^{[1]}$&2 & $1^{[2]}$ &6& 9&12&12\\

\hline

$\mathbf{48}$&$3^{[2]}$&2 & 1 &5&8 &11&11 &$\mathbf{102}$&$3^{[2]}$&2 & $1^{[1]}$&6&9&12&12 &$\mathbf{156}$&$3^{[2]}$&2 & $1^{[2]}$&7 &10 &13&13\\

\hline

$\mathbf{49}$&3&$2^{[1]}$ & 1 &4&4 &10&10 &$\mathbf{103}$&3&$2^{[1]}$ & $1^{[1]}$&5&11&11&11 &$\mathbf{157}$&3&$2^{[1]}$ & $1^{[2]}$ &6& 12&12&12\\

\hline

$\mathbf{50}$&$3^{[1]}$&$2^{[1]}$ & 1 &5&5 &11&11 &$\mathbf{104}$&$3^{[1]}$&$2^{[1]}$ & $1^{[1]}$&6&12&12&12 &$\mathbf{158}$&$3^{[1]}$&$2^{[1]}$ & $1^{[2]}$&7 &13 &13&13\\
\hline

$\mathbf{51}$&$3^{[2]}$&$2^{[1]}$ & 1 &6&3 &12&12 &$\mathbf{105}$&$3^{[2]}$&$2^{[1]}$ & $1^{[1]}$&7&10& 13&13&$\mathbf{159}$&$3^{[2]}$&$2^{[1]}$ & $1^{[2]}$&8 &11 &14&14\\

\hline

$\mathbf{52}$&3&$2^{[2]}$ & 1 &5& 5&11&11 &$\mathbf{106}$&3&$2^{[2]}$ & $1^{[1]}$&6&6&12&12 &$\mathbf{160}$&3&$2^{[2]}$ & $1^{[2]}$ &7&13 &13&13\\

\hline

$\mathbf{53}$&$3^{[1]}$&$2^{[2]}$ & 1 &6& 6& 12&12&$\mathbf{107}$&$3^{[1]}$&$2^{[2]}$ & $1^{[1]}$&7&7&13 &13&$\mathbf{161}$&$3^{[1]}$&$2^{[2]}$ & $1^{[2]}$ &8&14 &14&14\\

\hline

$\mathbf{54}$&$3^{[2]}$&$2^{[2]}$ & 1 &7&7 & 13&13&$\mathbf{108}$&$3^{[2]}$&$2^{[2]}$ & $1^{[1]}$&8&8&14&14 &$\mathbf{162}$&$3^{[2]}$&$2^{[2]}$ & $1^{[2]}$ &9&15 &15&15\\
\hline
\end{tabular}}
\end{table}

\renewcommand{\arraystretch}{0.0 }

\begin{table}[p]

	\caption{Distribution of $fmaj$ and $\widetilde{inv}$ on $G_{4,3}$}\label{g43}

\begin{center}
\scalebox{0.44}{

\begin{tabular}{|m{0.9cm}|m{0.4cm}m{0.4cm}m{0.5cm}|m{1.4cm}|m{0.9cm}||m{0.9cm}|m{0.4cm}m{0.4cm}m{0.5cm}|m{1.4cm}|m{0.9cm}||m{0.9cm}|m{0.4cm}m{0.4cm}m{0.5cm}|m{1.4cm}|m{0.9cm}||m{0.9cm}|m{0.4cm}m{0.4cm}m{0.5cm}|m{1.4cm}|m{1.0cm}|}

\hline

\centering

Rank&\textbf{$\pi_1~\pi_2~\pi_3$}&&&$fmaj(\pi)$&$\widetilde{inv}(\pi)$&Rank&\textbf{$\pi_1~~\pi_2~~~\pi_3$}&&&$fmaj(\pi)$&$\widetilde{inv}(\pi)$&Rank&\textbf{$\pi_1~~\pi_2~~~ \pi_3$}&&&$fmaj(\pi)$&$\widetilde{inv}(\pi)$&Rank&\textbf{$\pi_1~\pi_2~\pi_3$}&&&$fmaj(\pi)$&$\widetilde{inv}(\pi)$\\

\hline\hline

$\mathbf{1}$&1 &2 & 3 & 0& 0&$\mathbf{97}$&1 &2 & $3^{[1]}$ &9&1 &$\mathbf{193}$& 1 &2 & $3^{[2]}$ &10 &2 &$\mathbf{289}$&1 &2 & $3^{[3]}$ &11 &3\\

\hline

$\mathbf{2}$&$1^{[1]}$& 2& 3  &1 & 1&$\mathbf{98}$& $1^{[1]}$ &2 & $3^{[1]}$&10 & 2 &$\mathbf{194}$& $1^{[1]}$ &2 & $3^{[2]}$ &11 &3&$\mathbf{290}$& $1^{[1]}$ &2 & $3^{[3]}$&12 &4  \\

\hline

$\mathbf{3}$&$1^{[2]}$& 2& 3  & 2 &2&$\mathbf{99}$& $1^{[2]}$& 2& $3^{[1]}$  & 11&3 &$\mathbf{195}$& $1^{[2]}$ &2 & $3^{[2]}$ &12 &4&$\mathbf{291}$& $1^{[2]}$ &2 & $3^{[3]}$&13&5\\

\hline

$\mathbf{4}$&$1^{[3]}$& 2& 3  &3 &3&$\mathbf{100}$& $1^{[3]}$& 2& $3^{[1]}$ &12&4 &$\mathbf{196}$& $1^{[3]}$ &2 & $3^{[2]}$ &13 &5&$\mathbf{292}$& $1^{[3]}$ &2 & $3^{[3]}$&14&6\\

\hline

$\mathbf{5}$&1& $2^{[1]}$& 3 & 5& 1&$\mathbf{101}$& 1& $2^{[1]}$& $3^{[1]}$ &6&2 &$\mathbf{197}$&  1& $2^{[1]}$& $3^{[2]}$ & 15&3&$\mathbf{293}$&  1& $2^{[1]}$& $3^{[3]}$&16&4\\

\hline

$\mathbf{6}$&$1^{[1]}$& $2^{[1]}$& 3  & 2 &2&$\mathbf{102}$& $1^{[1]}$& $2^{[1]}$& $3^{[1]}$ &3 & 3&$\mathbf{198}$&   $1^{[1]}$& $2^{[1]}$& $3^{[2]}$ &12 &4&$\mathbf{294}$&  $1^{[1]}$& $2^{[1]}$& $3^{[3]}$&13&5\\

\hline

$\mathbf{7}$&$1^{[2]}$& $2^{[1]}$& 3  & 3 &3&$\mathbf{103}$&  $1^{[2]}$& $2^{[1]}$& $3^{[1]}$ &4 & 4&$\mathbf{199}$&   $1^{[2]}$& $2^{[1]}$& $3^{[2]}$ &13 &5&$\mathbf{295}$&  $1^{[2]}$& $2^{[1]}$& $3^{[3]}$&14&6\\

\hline

$\mathbf{8}$&$1^{[3]}$& $2^{[1]}$& 3  & 4&4 &$\mathbf{104}$& $1^{[3]}$& $2^{[1]}$& $3^{[1]}$ &5&5 &$\mathbf{200}$&  $1^{[3]}$& $2^{[1]}$& $3^{[2]}$ & 14&6&$\mathbf{296}$&  $1^{[3]}$& $2^{[1]}$& $3^{[3]}$&15&7\\
\hline

$\mathbf{9}$&1& $2^{[2]}$& 3 &6 &2 &$\mathbf{105}$& 1& $2^{[2]}$& $3^{[1]}$ &7 &3 &$\mathbf{201}$& 1& $2^{[2]}$& $3^{[2]}$&8 & 4&$\mathbf{297}$& 1& $2^{[2]}$& $3^{[3]}$&17&5\\

\hline

$\mathbf{10}$&$1^{[1]}$& $2^{[2]}$& 3  & 7 &3&$\mathbf{106}$& $1^{[1]}$& $2^{[2]}$& $3^{[1]}$ &  8& 4&$\mathbf{202}$& $1^{[1]}$& $2^{[2]}$& $3^{[2]}$& 9& 5&$\mathbf{298}$& $1^{[1]}$& $2^{[2]}$& $3^{[3]}$&18&6\\

\hline

$\mathbf{11}$&$1^{[2]}$& $2^{[2]}$& 3  &4 &4 &$\mathbf{107}$& $1^{[2]}$& $2^{[2]}$& $3^{[1]}$& 5 & 5&$\mathbf{203}$& $1^{[2]}$& $2^{[2]}$& $3^{[2]}$&6 &6 &$\mathbf{299}$& $1^{[2]}$& $2^{[2]}$& $3^{[3]}$&15&7\\

\hline

$\mathbf{12}$&$1^{[3]}$& $2^{[2]}$& 3  & 5&5 &$\mathbf{108}$& $1^{[3]}$& $2^{[2]}$& $3^{[1]}$ & 6& 6&$\mathbf{204}$& $1^{[3]}$& $2^{[2]}$& $3^{[2]}$&7 &7 &$\mathbf{300}$& $1^{[3]}$& $2^{[2]}$& $3^{[3]}$&16&8\\
\hline

$\mathbf{13}$&1& $2^{[3]}$& 3 &7 &3 &$\mathbf{109}$& 1& $2^{[3]}$& $3^{[1]}$ & 8 &4 &$\mathbf{205}$& 1& $2^{[3]}$& $3^{[2]}$&9 & 5&$\mathbf{301}$& 1& $2^{[3]}$& $3^{[3]}$&10&6\\

\hline

$\mathbf{14}$&$1^{[1]}$& $2^{[3]}$& 3  &8&4 &$\mathbf{110}$& $1^{[1]}$& $2^{[3]}$& $3^{[1]}$ &9  &5 &$\mathbf{206}$& $1^{[1]}$& $2^{[3]}$& $3^{[2]}$& 10& 6&$\mathbf{302}$& $1^{[1]}$& $2^{[3]}$& $3^{[3]}$&11&7\\

\hline

$\mathbf{15}$&$1^{[2]}$& $2^{[3]}$& 3  &9& 5&$\mathbf{111}$& $1^{[2]}$& $2^{[3]}$& $3^{[1]}$& 10&6 &$\mathbf{207}$& $1^{[2]}$& $2^{[3]}$& $3^{[2]}$&11 &7 &$\mathbf{303}$& $1^{[2]}$& $2^{[3]}$& $3^{[3]}$&12&8\\

\hline

$\mathbf{16}$&$1^{[3]}$& $2^{[3]}$& 3 &6& 6&$\mathbf{112}$& $1^{[3]}$& $2^{[3]}$& $3^{[1]}$&7 &7 &$\mathbf{208}$& $1^{[3]}$& $2^{[3]}$& $3^{[2]}$& 8& 8&$\mathbf{304}$& $1^{[3]}$& $2^{[3]}$& $3^{[3]}$&9&9\\

\hline

$\mathbf{17}$&2& 1& 3 &4 &4 &$\mathbf{113}$& 2& 1& $3^{[1]}$ &13&5&$\mathbf{209}$&  2& 1& $3^{[2]}$&14 &6 &$\mathbf{305}$&  2& 1& $3^{[3]}$&15&7\\
\hline

$\mathbf{18}$&$2^{[1]}$& 1& 3 &1 &5&$\mathbf{114}$&$2^{[1]}$& 1& $3^{[1]}$&10 &6&$\mathbf{210}$&  $2^{[1]}$& 1& $3^{[2]}$&11 &7 &$\mathbf{306}$&  $2^{[1]}$& 1& $3^{[3]}$&12&8\\
\hline

$\mathbf{19}$&$2^{[2]}$& 1& 3 &2 & 6&$\mathbf{115}$& $2^{[2]}$& 1& $3^{[1]}$ &11&7&$\mathbf{211}$&  $2^{[2]}$& 1& $3^{[2]}$&12 &8 &$\mathbf{307}$&  $2^{[2]}$& 1& $3^{[3]}$&13&9\\

\hline

$\mathbf{20}$&$2^{[3]}$& 1& 3 &3 &7 &$\mathbf{116}$& $2^{[3]}$& 1& $3^{[1]}$  &12 &8&$\mathbf{212}$&  $2^{[3]}$& 1& $3^{[2]}$&13 &9 &$\mathbf{308}$&  $2^{[3]}$& 1& $3^{[3]}$&14&10\\

\hline

$\mathbf{21}$&2& $1^{[1]}$& 3 &5 &5 &$\mathbf{117}$&2& $1^{[1]}$& $3^{[1]}$ &6 &6&$\mathbf{213}$&  2& $1^{[1]}$& $3^{[2]}$&15 & 7&$\mathbf{309}$&  2& $1^{[1]}$& $3^{[3]}$&16&8\\

\hline

$\mathbf{22}$&$2^{[1]}$& $1^{[1]}$& 3 & 6&6 &$\mathbf{118}$& $2^{[1]}$& $1^{[1]}$& $3^{[1]}$ &7 &7&$\mathbf{214}$&  $2^{[1]}$& $1^{[1]}$& $3^{[2]}$&16 &8 &$\mathbf{310}$&  $2^{[1]}$& $1^{[1]}$& $3^{[3]}$&17&9\\

\hline

$\mathbf{23}$&$2^{[2]}$& $1^{[1]}$& 3 &3 &7 &$\mathbf{119}$&$2^{[2]}$& $1^{[1]}$& $3^{[1]}$&4 &8&$\mathbf{215}$&  $2^{[2]}$& $1^{[1]}$& $3^{[2]}$&13 &9 &$\mathbf{311}$&  $2^{[2]}$& $1^{[1]}$& $3^{[3]}$&14&10\\

\hline

$\mathbf{24}$&$2^{[3]}$& $1^{[1]}$& 3 & 4& 8&$\mathbf{120}$& $2^{[3]}$& $1^{[1]}$& $3^{[1]}$  & 5 &9&$\mathbf{216}$&  $2^{[3]}$& $1^{[1]}$& $3^{[2]}$& 14& 10&$\mathbf{312}$&  $2^{[3]}$& $1^{[1]}$& $3^{[3]}$&15&11\\

\hline

$\mathbf{25}$&2& $1^{[2]}$& 3 &6 &6&$\mathbf{121}$& 2& $1^{[2]}$& $3^{[1]}$&7  &7&$\mathbf{217}$&  2& $1^{[2]}$& $3^{[2]}$&8 &8 &$\mathbf{313}$&  2& $1^{[2]}$& $3^{[3]}$&17&9\\

\hline

$\mathbf{26}$&$2^{[1]}$& $1^{[2]}$& 3 &7&7 &$\mathbf{122}$& $2^{[1]}$& $1^{[2]}$& $3^{[1]}$ &8&8&$\mathbf{218}$&  $2^{[1]}$& $1^{[2]}$& $3^{[2]}$& 9& 9&$\mathbf{314}$&  $2^{[1]}$& $1^{[2]}$& $3^{[3]}$&18&10\\

\hline

$\mathbf{27}$&$2^{[2]}$& $1^{[2]}$& 3 &8 &8 &$\mathbf{123}$& $2^{[2]}$& $1^{[2]}$& $3^{[1]}$& 9&9&$\mathbf{219}$&  $2^{[2]}$& $1^{[2]}$& $3^{[2]}$& 10&10 &$\mathbf{315}$&  $2^{[2]}$& $1^{[2]}$& $3^{[3]}$&19&11\\
\hline

$\mathbf{28}$&$2^{[3]}$& $1^{[2]}$& 3 &5 & 9&$\mathbf{124}$&$2^{[3]}$& $1^{[2]}$ & $3^{[1]}$&6 &10&$\mathbf{220}$&  $2^{[3]}$& $1^{[2]}$& $3^{[2]}$&7 &11&$\mathbf{316}$&  $2^{[3]}$& $1^{[2]}$& $3^{[3]}$&16&12\\

\hline

$\mathbf{29}$&2& $1^{[3]}$& 3 &7 &7 &$\mathbf{125}$&2& $1^{[3]}$& $3^{[1]}$ & 8&8&$\mathbf{221}$&  2& $1^{[3]}$& $3^{[2]}$& 9& 9&$\mathbf{317}$&  2& $1^{[3]}$& $3^{[3]}$&10&10\\

\hline

$\mathbf{30}$&$2^{[1]}$& $1^{[3]}$& 3 &8 & 8&$\mathbf{126}$&$2^{[1]}$& $1^{[3]}$ & $3^{[1]}$&9&9&$\mathbf{222}$&  $2^{[1]}$& $1^{[3]}$& $3^{[2]}$&10 &10 &$\mathbf{318}$&  $2^{[1]}$& $1^{[3]}$& $3^{[3]}$&11&11\\

\hline

$\mathbf{31}$&$2^{[2]}$& $1^{[3]}$& 3 &9 &9 &$\mathbf{127}$& $2^{[2]}$& $1^{[3]}$& $3^{[1]}$&10 &10&$\mathbf{223}$&  $2^{[2]}$& $1^{[3]}$& $3^{[2]}$& 11&11 &$\mathbf{319}$&  $2^{[2]}$& $1^{[3]}$& $3^{[3]}$&12&12\\

\hline

$\mathbf{32}$&$2^{[3]}$& $1^{[3]}$& 3 &10 &10 &$\mathbf{128}$& $2^{[3]}$& $1^{[3]}$& $3^{[1]}$& 11 &11&$\mathbf{224}$&  $2^{[3]}$& $1^{[3]}$& $3^{[2]}$&12 &12&$\mathbf{320}$&  $2^{[3]}$& $1^{[3]}$& $3^{[3]}$&13&13\\

\hline
$\mathbf{33}$&1& 3& 2  & 8&4 &$\mathbf{129}$&1& 3& $2^{[1]}$  &9  &5&$\mathbf{225}$& 1& 3& $2^{[2]}$& 10&6 &$\mathbf{321}$& 1& 3& $2^{[3]}$&11&7\\

\hline

$\mathbf{34}$&$1^{[1]}$ & 3& 2  & 9&5 &$\mathbf{130}$&$1^{[1]}$ & 3& $2^{[1]}$  &10  &6&$\mathbf{226}$& $1^{[1]}$ & 3& $2^{[2]}$&11 & 7&$\mathbf{322}$& $1^{[1]}$ & 3& $2^{[3]}$&12&8\\

\hline

$\mathbf{35}$&$1^{[2]}$ & 3& 2  & 10&6 &$\mathbf{131}$&$1^{[2]}$ & 3& $2^{[1]}$  &11  &7&$\mathbf{227}$& $1^{[2]}$ & 3& $2^{[2]}$&12 &8 &$\mathbf{323}$& $1^{[2]}$ & 3& $2^{[3]}$&13&9\\
\hline

$\mathbf{36}$&$1^{[3]}$ & 3& 2  &11 &7 &$\mathbf{132}$&$1^{[3]}$ & 3& $2^{[1]}$  &  12&8&$\mathbf{228}$& $1^{[3]}$ & 3& $2^{[2]}$&13 &9 &$\mathbf{324}$& $1^{[3]}$ & 3& $2^{[3]}$&14&10\\

\hline

$\mathbf{37}$&1& $3^{[1]}$& 2 &5 &5 &$\mathbf{133}$&1& $3^{[1]}$& $2^{[1]}$  & 14 &6&$\mathbf{229}$& 1& $3^{[1]}$& $2^{[2]}$&15 &7 &$\mathbf{325}$& 1& $3^{[1]}$& $2^{[3]}$&16&8\\
\hline

$\mathbf{38}$& $1^{[1]}$& $3^{[1]}$& 2  &2 &6 &$\mathbf{134}$&$1^{[1]}$& $3^{[1]}$& $2^{[1]}$  &11  &7&$\mathbf{230}$& $1^{[1]}$& $3^{[1]}$& $2^{[2]}$& 12&8 &$\mathbf{326}$& $1^{[1]}$& $3^{[1]}$& $2^{[3]}$&13&9\\
\hline

$\mathbf{39}$& $1^{[2]}$& $3^{[1]}$& 2  &3 &7 &$\mathbf{135}$&$1^{[2]}$& $3^{[1]}$& $2^{[1]}$  & 12 &8&$\mathbf{231}$& $1^{[2]}$& $3^{[1]}$& $2^{[2]}$&13 &9 &$\mathbf{327}$& $1^{[2]}$& $3^{[1]}$& $2^{[3]}$&14&10\\

\hline

$\mathbf{40}$& $1^{[3]}$& $3^{[1]}$& 2  &4 &8 &$\mathbf{136}$&$1^{[3]}$& $3^{[1]}$& $2^{[1]}$  &13  &9&$\mathbf{232}$& $1^{[3]}$& $3^{[1]}$& $2^{[2]}$&14 & 10&$\mathbf{328}$& $1^{[3]}$& $3^{[1]}$& $2^{[3]}$&15&11\\

\hline
$\mathbf{41}$&1& $3^{[2]}$& 2 &6 & 6&$\mathbf{137}$&1& $3^{[2]}$& $2^{[1]}$  &7  &7&$\mathbf{233}$& 1& $3^{[2]}$& $2^{[2]}$& 16&8 &$\mathbf{329}$& 1& $3^{[2]}$& $2^{[3]}$&17&9\\
\hline

$\mathbf{42}$& $1^{[1]}$& $3^{[2]}$& 2  & 7&7 &$\mathbf{138}$&$1^{[1]}$& $3^{[2]}$& $2^{[1]}$  &8  &8&$\mathbf{234}$& $1^{[1]}$& $3^{[2]}$& $2^{[2]}$&17 &9 &$\mathbf{330}$& $1^{[1]}$& $3^{[2]}$& $2^{[3]}$&18&10\\
\hline

$\mathbf{43}$& $1^{[2]}$& $3^{[2]}$& 2  & 4&8 &$\mathbf{139}$&$1^{[2]}$& $3^{[2]}$& $2^{[1]}$  &5  &9&$\mathbf{235}$& $1^{[2]}$& $3^{[2]}$& $2^{[2]}$&14 &10 &$\mathbf{331}$& $1^{[2]}$& $3^{[2]}$& $2^{[3]}$&15&11\\

\hline

$\mathbf{44}$& $1^{[3]}$& $3^{[2]}$& 2  &5 &9&$\mathbf{140}$&$1^{[3]}$& $3^{[2]}$& $2^{[1]}$  & 6 &10&$\mathbf{236}$& $1^{[3]}$& $3^{[2]}$& $2^{[2]}$&15 &11 &$\mathbf{332}$& $1^{[3]}$& $3^{[2]}$& $2^{[3]}$&16&12\\

\hline

$\mathbf{45}$&1& $3^{[3]}$& 2 &7 &7 &$\mathbf{141}$&1& $3^{[3]}$& $2^{[1]}$  &  8&8&$\mathbf{237}$& 1& $3^{[3]}$& $2^{[2]}$&9 &9 &$\mathbf{333}$& 1& $3^{[3]}$& $2^{[3]}$&18&10\\

\hline

$\mathbf{46}$&$1^{[1]}$& $3^{[3]}$& 2 &8 &8 &$\mathbf{142}$&$1^{[1]}$& $3^{[3]}$& $2^{[1]}$  &9  &9&$\mathbf{238}$& $1^{[1]}$& $3^{[3]}$& $2^{[2]}$&10 &10 &$\mathbf{334}$& $1^{[1]}$& $3^{[3]}$& $2^{[3]}$&19&11\\

\hline

$\mathbf{47}$&$1^{[2]}$& $3^{[3]}$& 2 &9 &9 &$\mathbf{143}$&$1^{[2]}$& $3^{[3]}$& $2^{[1]}$  & 10 &10&$\mathbf{239}$& $1^{[2]}$& $3^{[3]}$& $2^{[2]}$& 11&11 &$\mathbf{335}$& $1^{[2]}$& $3^{[3]}$& $2^{[3]}$&20&12\\

\hline

$\mathbf{48}$&$1^{[3]}$& $3^{[3]}$& 2 &6 & 10&$\mathbf{144}$&$1^{[3]}$& $3^{[3]}$& $2^{[1]}$  & 7 &11&$\mathbf{240}$& $1^{[3]}$& $3^{[3]}$& $2^{[2]}$&8 & 12&$\mathbf{336}$& $1^{[3]}$& $3^{[3]}$& $2^{[3]}$&17&13\\
\hline
$\mathbf{49}$&3& 1& 2  &4 &8 &$\mathbf{145}$&3& 1& $2^{[1]}$  &  13&9&$\mathbf{241}$& 3& 1& $2^{[2]}$&14 &10 &$\mathbf{337}$& 3& 1& $2^{[3]}$&15&11\\

\hline

$\mathbf{50}$&$3^{[1]}$ & 1& 2  &1 &9 &$\mathbf{146}$&$3^{[1]}$ & 1& $2^{[1]}$  &  10&10&$\mathbf{242}$& $3^{[1]}$ & 1& $2^{[2]}$&11 & 11&$\mathbf{338}$& $3^{[1]}$ & 1& $2^{[3]}$&12&12\\

\hline

$\mathbf{51}$&$3^{[2]}$ & 1& 2  &2 &10 &$\mathbf{147}$&$3^{[2]}$ & 1& $2^{[1]}$  &  11&11&$\mathbf{243}$& $3^{[2]}$ & 1& $2^{[2]}$&12 &12 &$\mathbf{339}$& $3^{[2]}$ & 1& $2^{[3]}$&13&13\\
\hline

$\mathbf{52}$&$3^{[3]}$ & 1& 2  &3 &11 &$\mathbf{148}$&$3^{[3]}$ & 1& $2^{[1]}$  &  12&12&$\mathbf{244}$& $3^{[3]}$ & 1& $2^{[2]}$&13 &13 &$\mathbf{340}$& $3^{[3]}$ & 1& $2^{[3]}$&14&14\\

\hline

$\mathbf{53}$&3& $1^{[1]}$& 2 & 5&9 &$\mathbf{149}$&3& $1^{[1]}$& $2^{[1]}$  &  6&10&$\mathbf{245}$& 3&$1^{[1]}$& $2^{[2]}$&15 &11 &$\mathbf{341}$& 3& $1^{[1]}$& $2^{[3]}$&16&12\\
\hline

$\mathbf{54}$&$3^{[1]}$& $1^{[1]}$& 2 &6 &10 &$\mathbf{150}$&$3^{[1]}$& $1^{[1]}$& $2^{[1]}$  & 7 &11&$\mathbf{246}$& $3^{[1]}$&$1^{[1]}$& $2^{[2]}$& 16&12 &$\mathbf{342}$& $3^{[1]}$& $1^{[1]}$& $2^{[3]}$&17&13\\
\hline

$\mathbf{55}$&$3^{[2]}$& $1^{[1]}$& 2 &3 &11 &$\mathbf{151}$&$3^{[2]}$& $1^{[1]}$& $2^{[1]}$  &4  &12&$\mathbf{247}$& $3^{[2]}$&$1^{[1]}$& $2^{[2]}$&13 &13 &$\mathbf{343}$& $3^{[2]}$& $1^{[1]}$& $2^{[3]}$&14&14\\

\hline

$\mathbf{56}$&$3^{[3]}$& $1^{[1]}$& 2 &4 &12 &$\mathbf{152}$&$3^{[3]}$& $1^{[1]}$& $2^{[1]}$  & 5 &13&$\mathbf{248}$& $3^{[3]}$&$1^{[1]}$& $2^{[2]}$&14 &14 &$\mathbf{344}$& $3^{[3]}$& $1^{[1]}$& $2^{[3]}$&15&15\\

\hline
$\mathbf{57}$&3& $1^{[2]}$& 2 &6 & 10&$\mathbf{153}$&3& $1^{[2]}$& $2^{[1]}$  &  7&11&$\mathbf{249}$& 3&$1^{[2]}$& $2^{[2]}$&8 &12 &$\mathbf{345}$& 3& $1^{[2]}$& $2^{[3]}$&17&13\\
\hline

$\mathbf{58}$& $3^{[1]}$& $1^{[2]}$& 2 &7 &11 &$\mathbf{154}$&$3^{[1]}$& $1^{[2]}$& $2^{[1]}$  & 8 &12&$\mathbf{250}$& $3^{[1]}$&$1^{[2]}$& $2^{[2]}$&9 &13 &$\mathbf{346}$& $3^{[1]}$& $1^{[2]}$& $2^{[3]}$&18&14\\
\hline

$\mathbf{59}$& $3^{[2]}$& $1^{[2]}$& 2 &8 &12 &$\mathbf{155}$&$3^{[2]}$& $1^{[2]}$& $2^{[1]}$  & 9 &13&$\mathbf{251}$& $3^{[2]}$&$1^{[2]}$& $2^{[2]}$&10 &14 &$\mathbf{347}$& $3^{[2]}$& $1^{[2]}$& $2^{[3]}$&19&15\\
\hline

$\mathbf{60}$&$3^{[3]}$& $1^{[2]}$& 2 &5 &13 &$\mathbf{156}$&$3^{[3]}$& $1^{[2]}$& $2^{[1]}$  & 6 &14&$\mathbf{252}$& $3^{[3]}$&$1^{[2]}$& $2^{[2]}$&7 & 15&$\mathbf{348}$& $3^{[3]}$& $1^{[2]}$& $2^{[3]}$&16&16\\
\hline

$\mathbf{61}$&3& $1^{[3]}$& 2 &7 &11 &$\mathbf{157}$&3& $1^{[3]}$& $2^{[1]}$  &  8&12&$\mathbf{253}$& 3&$1^{[3]}$& $2^{[2]}$& 9&13 &$\mathbf{349}$& 3& $1^{[3]}$& $2^{[3]}$&10&14\\

\hline

$\mathbf{62}$&$3^{[1]}$& $1^{[3]}$& 2 &8 &12 &$\mathbf{158}$&$3^{[1]}$& $1^{[3]}$& $2^{[1]}$  & 9 &13&$\mathbf{254}$& $3^{[1]}$&$1^{[3]}$& $2^{[2]}$&10 &14 &$\mathbf{350}$& $3^{[1]}$& $1^{[3]}$& $2^{[3]}$&11&15\\

\hline

$\mathbf{63}$&$3^{[2]}$& $1^{[3]}$& 2 & 9&13&$\mathbf{159}$&$3^{[2]}$& $1^{[3]}$& $2^{[1]}$  & 10 &14&$\mathbf{255}$& $3^{[2]}$&$1^{[3]}$& $2^{[2]}$&11 &15 &$\mathbf{351}$& $3^{[2]}$& $1^{[3]}$& $2^{[3]}$&12&16\\

\hline

$\mathbf{64}$&$3^{[3]}$& $1^{[3]}$& 2 &10 &14 &$\mathbf{160}$&$3^{[3]}$& $1^{[3]}$& $2^{[1]}$  & 11 &15&$\mathbf{256}$& $3^{[3]}$&$1^{[3]}$& $2^{[2]}$&12 &16 &$\mathbf{352}$& $3^{[3]}$& $1^{[3]}$& $2^{[3]}$&13&17\\

\hline
$\mathbf{65}$&2&3 & 1 &8 &8 &$\mathbf{161}$&2&3 & $1^{[1]}$&9& 9&$\mathbf{257}$&2&3 & $1^{[2]}$ & 10&10&$\mathbf{353}$&2&3 & $1^{[3]}$&11&11\\

\hline

$\mathbf{66}$&$2^{[1]}$&3 & 1 &9 &9 &$\mathbf{162}$&$2^{[1]}$&3 & $1^{[1]}$&10&10 &$\mathbf{258}$&$2^{[1]}$&3 & $1^{[2]}$ &11 &11&$\mathbf{354}$&$2^{[1]}$&3 & $1^{[3]}$&12&12\\

\hline

$\mathbf{67}$&$2^{[2]}$&3 & 1 & 10&10 &$\mathbf{163}$&$2^{[2]}$&3 & $1^{[1]}$&11& 11&$\mathbf{259}$&$2^{[2]}$&3 & $1^{[2]}$ &12 &12&$\mathbf{355}$&$2^{[2]}$&3 & $1^{[3]}$&13&13\\

\hline

$\mathbf{68}$&$2^{[3]}$&3 & 1 &11 &11 &$\mathbf{164}$&$2^{[3]}$&3 & $1^{[1]}$&12&12 &$\mathbf{260}$&$2^{[3]}$&3 & $1^{[2]}$ & 13&13&$\mathbf{356}$&$2^{[3]}$&3 & $1^{[3]}$&14&14\\
\hline

$\mathbf{69}$&2&$3^{[1]}$ & 1 & 5&9 &$\mathbf{165}$&2&$3^{[1]}$ & $1^{[1]}$&14&10 &$\mathbf{261}$&2&$3^{[1]}$ & $1^{[2]}$ & 15&11&$\mathbf{357}$&2&$3^{[1]}$ & $1^{[3]}$&16&12\\

\hline

$\mathbf{70}$&$2^{[1]}$&$3^{[1]}$ & 1 &2 & 10&$\mathbf{166}$&$2^{[1]}$&$3^{[1]}$ & $1^{[1]}$&11&11 &$\mathbf{262}$&$2^{[1]}$&$3^{[1]}$ & $1^{[2]}$ & 12&12&$\mathbf{358}$&$2^{[1]}$&$3^{[1]}$ & $1^{[3]}$&13&13\\

\hline

$\mathbf{71}$&$2^{[2]}$&$3^{[1]}$ & 1 & 3&11 &$\mathbf{167}$&$2^{[2]}$&$3^{[1]}$ & $1^{[1]}$&12&12 &$\mathbf{263}$&$2^{[2]}$&$3^{[1]}$ & $1^{[2]}$ & 13&13&$\mathbf{359}$&$2^{[2]}$&$3^{[1]}$ & $1^{[3]}$&14&14\\

\hline

$\mathbf{72}$&$2^{[3]}$&$3^{[1]}$ & 1 &4 & 12&$\mathbf{168}$&$2^{[3]}$&$3^{[1]}$ & $1^{[1]}$&13& 13&$\mathbf{264}$&$2^{[3]}$&$3^{[1]}$ & $1^{[2]}$ & 14&14&$\mathbf{360}$&$2^{[2]}$&$3^{[1]}$ & $1^{[3]}$&15&15\\

\hline

$\mathbf{73}$&2&$3^{[2]}$ & 1 &6 &10 &$\mathbf{169}$&2&$3^{[2]}$ & $1^{[1]}$&7&11 &$\mathbf{265}$&2&$3^{[2]}$ & $1^{[2]}$ &16 &12&$\mathbf{361}$&2&$3^{[2]}$ & $1^{[3]}$&17&13\\

\hline

$\mathbf{74}$&$2^{[1]}$&$3^{[2]}$ & 1 &7 &11 &$\mathbf{170}$&$2^{[1]}$&$3^{[2]}$ & $1^{[1]}$&8& 12&$\mathbf{266}$&$2^{[1]}$&$3^{[2]}$ & $1^{[2]}$ & 17&13&$\mathbf{362}$&$2^{[1]}$&$3^{[2]}$ & $1^{[3]}$&18&14\\
\hline

$\mathbf{75}$&$2^{[2]}$&$3^{[2]}$ & 1 & 4& 12&$\mathbf{171}$&$2^{[2]}$&$3^{[2]}$ & $1^{[1]}$&5& 13&$\mathbf{267}$&$2^{[2]}$&$3^{[2]}$ & $1^{[2]}$ & 14&14&$\mathbf{363}$&$2^{[2]}$&$3^{[2]}$ & $1^{[3]}$&15&15\\

\hline

$\mathbf{76}$&$2^{[3]}$&$3^{[2]}$ & 1 &5 &13 &$\mathbf{172}$&$2^{[3]}$&$3^{[2]}$ & $1^{[1]}$&6&14 &$\mathbf{268}$&$2^{[3]}$&$3^{[2]}$ & $1^{[2]}$ & 15&15&$\mathbf{364}$&$2^{[3]}$&$3^{[2]}$ & $1^{[3]}$&16&16\\

\hline

$\mathbf{77}$&2&$3^{[3]}$ & 1 &7 & 11&$\mathbf{173}$&2&$3^{[3]}$ & $1^{[1]}$&8& 12&$\mathbf{269}$&2&$3^{[3]}$ & $1^{[2]}$ & 9&13&$\mathbf{365}$&2&$3^{[3]}$ & $1^{[3]}$&18&14\\

\hline

$\mathbf{78}$&$2^{[1]}$&$3^{[3]}$ & 1 & 8&12 &$\mathbf{174}$&$2^{[1]}$&$3^{[3]}$ & $1^{[1]}$&9&13 &$\mathbf{270}$&$2^{[1]}$&$3^{[3]}$ & $1^{[2]}$ & 10&14&$\mathbf{366}$&$2^{[1]}$&$3^{[3]}$ & $1^{[3]}$&19&15\\

\hline

$\mathbf{79}$&$2^{[2]}$&$3^{[3]}$ & 1 &9 &13 &$\mathbf{175}$&$2^{[2]}$&$3^{[3]}$ & $1^{[1]}$&10&14 &$\mathbf{271}$&$2^{[2]}$&$3^{[3]}$ & $1^{[2]}$ & 11&15&$\mathbf{367}$&$2^{[2]}$&$3^{[3]}$ & $1^{[3]}$&20&16\\

\hline

$\mathbf{80}$&$2^{[3]}$&$3^{[3]}$ & 1 &6 &14 &$\mathbf{176}$&$2^{[3]}$&$3^{[3]}$ & $1^{[1]}$&7& 15&$\mathbf{272}$&$2^{[3]}$&$3^{[3]}$ & $1^{[2]}$ & 8&16&$\mathbf{368}$&$2^{[3]}$&$3^{[3]}$ & $1^{[3]}$&17&17\\

\hline

$\mathbf{81}$&3&2 & 1 &12 &12 &$\mathbf{177}$&3&2 & $1^{[1]}$&13& 13&$\mathbf{273}$&3&2 & $1^{[2]}$ & 14&14&$\mathbf{369}$&3&2 & $1^{[3]}$&15&15\\

\hline

$\mathbf{82}$&$3^{[1]}$&2 & 1 &9 &13 &$\mathbf{178}$&$3^{[1]}$&2 & $1^{[1]}$&10&14&$\mathbf{274}$&$3^{[1]}$&2 & $1^{[2]}$ & 11&15&$\mathbf{370}$&$3^{[1]}$&2 & $1^{[3]}$&12&16\\

\hline

$\mathbf{83}$&$3^{[2]}$&2 & 1 &10 & 14&$\mathbf{179}$&$3^{[2]}$&2 & $1^{[1]}$&11&15 &$\mathbf{275}$&$3^{[2]}$&2 & $1^{[2]}$ & 12&16&$\mathbf{371}$&$3^{[2]}$&2 & $1^{[3]}$&13&17\\

\hline

$\mathbf{84}$&$3^{[3]}$&2 & 1 &11 &15 &$\mathbf{180}$&$3^{[3]}$&2 & $1^{[1]}$&12&16 &$\mathbf{276}$&$3^{[3]}$&2 & $1^{[2]}$ & 13&17&$\mathbf{372}$&$3^{[3]}$&2 & $1^{[3]}$&14&18\\

\hline

$\mathbf{85}$&3&$2^{[1]}$ & 1 & 5&13 &$\mathbf{181}$&3&$2^{[1]}$ & $1^{[1]}$&14&14 &$\mathbf{277}$&3&$2^{[1]}$ & $1^{[2]}$ &15 &15&$\mathbf{373}$&3&$2^{[1]}$ & $1^{[3]}$&16&16\\

\hline

$\mathbf{86}$&$3^{[1]}$&$2^{[1]}$ & 1 &6 &14 &$\mathbf{182}$&$3^{[1]}$&$2^{[1]}$ & $1^{[1]}$&15& 15&$\mathbf{278}$&$3^{[1]}$&$2^{[1]}$ & $1^{[2]}$ & 16&16&$\mathbf{374}$&$3^{[1]}$&$2^{[1]}$ & $1^{[3]}$&17&17\\
\hline

$\mathbf{87}$&$3^{[2]}$&$2^{[1]}$ & 1 &3 & 15&$\mathbf{183}$&$3^{[2]}$&$2^{[1]}$ & $1^{[1]}$&12&16 &$\mathbf{279}$&$3^{[2]}$&$2^{[1]}$ & $1^{[2]}$ & 13&17&$\mathbf{375}$&$3^{[2]}$&$2^{[1]}$ & $1^{[3]}$&14&18\\

\hline

$\mathbf{88}$&$3^{[3]}$&$2^{[1]}$ & 1 &4 &16 &$\mathbf{184}$&$3^{[3]}$&$2^{[1]}$ & $1^{[1]}$&13& 17&$\mathbf{280}$&$3^{[3]}$&$2^{[1]}$ & $1^{[2]}$ & 14&18&$\mathbf{376}$&$3^{[3]}$&$2^{[1]}$ & $1^{[3]}$&15&19\\

\hline

$\mathbf{89}$&3&$2^{[2]}$ & 1 & 6&14 &$\mathbf{185}$&3&$2^{[2]}$ & $1^{[1]}$&7&15 &$\mathbf{281}$&3&$2^{[2]}$ & $1^{[2]}$ &16 &16&$\mathbf{377}$&3&$2^{[2]}$ & $1^{[3]}$&17&17\\

\hline

$\mathbf{90}$&$3^{[1]}$&$2^{[2]}$ & 1 &7 &15 &$\mathbf{186}$&$3^{[1]}$&$2^{[2]}$ & $1^{[1]}$&8& 16&$\mathbf{282}$&$3^{[1]}$&$2^{[2]}$ & $1^{[2]}$ & 17&17&$\mathbf{378}$&$3^{[1]}$&$2^{[2]}$ & $1^{[3]}$&18&18\\

\hline

$\mathbf{91}$&$3^{[2]}$&$2^{[2]}$ & 1 &8 &16 &$\mathbf{187}$&$3^{[2]}$&$2^{[2]}$ & $1^{[1]}$&9&17 &$\mathbf{283}$&$3^{[2]}$&$2^{[2]}$ & $1^{[2]}$ & 18&18&$\mathbf{379}$&$3^{[2]}$&$2^{[2]}$ & $1^{[3]}$&19&19\\
\hline

$\mathbf{92}$&$3^{[3]}$&$2^{[2]}$ & 1 &5 &17 &$\mathbf{188}$&$3^{[3]}$&$2^{[2]}$ & $1^{[1]}$&6&18 &$\mathbf{284}$&$3^{[3]}$&$2^{[2]}$ & $1^{[2]}$ & 15&19&$\mathbf{380}$&$3^{[3]}$&$2^{[2]}$ & $1^{[3]}$&16&20\\

\hline

$\mathbf{93}$&3&$2^{[3]}$ & 1 &7 &15 &$\mathbf{189}$&3&$2^{[3]}$ & $1^{[1]}$&8&16 &$\mathbf{285}$&3&$2^{[3]}$ & $1^{[2]}$ & 9&17&$\mathbf{381}$&3&$2^{[3]}$ & $1^{[3]}$&18&18\\

\hline

$\mathbf{94}$&$3^{[1]}$&$2^{[3]}$ & 1 & 8&16 &$\mathbf{190}$&$3^{[1]}$&$2^{[3]}$ & $1^{[1]}$&9&17 &$\mathbf{286}$&$3^{[1]}$&$2^{[3]}$ & $1^{[2]}$ & 10&18&$\mathbf{382}$&$3^{[1]}$&$2^{[3]}$ & $1^{[3]}$&19&19\\

\hline

$\mathbf{95}$&$3^{[2]}$&$2^{[3]}$ & 1 &9 &17 &$\mathbf{191}$&$3^{[2]}$&$2^{[3]}$ & $1^{[1]}$&10&18 &$\mathbf{287}$&$3^{[2]}$&$2^{[3]}$ & $1^{[2]}$ & 11&19&$\mathbf{383}$&$3^{[2]}$&$2^{[3]}$ & $1^{[3]}$&20&20\\
\hline

$\mathbf{96}$&$3^{[3]}$&$2^{[3]}$ & 1 &10 &18 &$\mathbf{192}$&$3^{[3]}$&$2^{[3]}$ & $1^{[1]}$&11&19 &$\mathbf{288}$&$3^{[2]}$&$2^{[3]}$ & $1^{[2]}$ & 12&20&$\mathbf{384}$&$3^{[3]}$&$2^{[3]}$ & $1^{[3]}$&21&21\\

\hline \hline

\end{tabular}}
\end{center}
\end{table}

%% Use \section commands to start a section
%\section{Example Section}
%\label{sec1}
%% Labels are used to cross-reference an item using \ref command.

%Section text. See Subsection \ref{subsec1}.

%% Use \subsection commands to start a subsection.
%\subsection{Example Subsection}
%\label{subsec1}

\end{document}